\newtheorem{theorem}{Theorem}[section]
\newtheorem{lemma}[theorem]{Lemma}
\newtheorem{corollary}[theorem]{Corollary}
\theoremstyle{definition}
\newtheorem{definition}[theorem]{Definition}
\theoremstyle{remark}
\newtheorem{remark}[theorem]{Remark}
\def\innprod#1#2{\left<#1,#2\right>}
\numberwithin{equation}{section}
\begin{document}
\title{Sharp upperbound and a comparison theorem for the first nonzero Steklov eigenvalue}
\author{Binoy}
\address{Department of Mathematics and Statistics, Indian Institute of Technology, Kanpur 208016, India}
\email{binoy@iitk.ac.in}
\author{G. Santhanam}
\address{Department of Mathematics and Statistics, Indian Institute of Technology, Kanpur 208016, India}
\email{santhana@iitk.ac.in}
\thanks{First author is supported by a research fellowship of CSIR, India.}

\subjclass[2000]{Primary 43A85; Secondary 22E30}
\keywords{Rank-1 symmetric spaces, Steklov eigenvalue}

\begin{abstract} 
 Let $\mathbb{M}$ denote either a noncompact rank-1 symmetric space $(\overline{M}, ds^2)$ such that $-4 \leq K_{\overline{M}} \leq -1$ 
or a complete, simply connected Riemannian manifold $(\overline{M}, \bar{g})$ 
of dimension $n$ with $K_{\overline{M}} \leq k$ where $k = -\delta^2$ or $0$. Let $\Omega$ be a bounded domain in $\mathbb{M}$ with smooth boundary $\partial \Omega = M$ 
and $\nu_1(\Omega)$ be the first nonzero Steklov eigenvalue on $\Omega$. In the case 
$\mathbb{M} = (\overline{M}, ds^2)$, we prove 
$$
\nu_1(\Omega) \leq \nu_1(B(R))
$$
where $B(R) \subset \overline{M}$ is a geodesic ball such that $Vol(\Omega) = Vol(B(R))$. The equality holds if and only if $\Omega$ is a geodesic ball. 
In the case $\mathbb{M} = (\overline{M}, \bar{g})$, we prove 
$$
\nu_1(\Omega) \leq C_k\nu_1(B_k(R_k))
$$ where $B_k(R_k)$ is a geodesic ball of radius $R_k> 0$ in the simply connected space form $\mathbb{M}(k)$ such that $Vol(\Omega)
 = B_k(R_k)$  and $C_k \geq 1$ is a constant which depends only on the volume of $\Omega$ and the dimension of $\mathbb{M}$.
The inequality is sharp as the equality holds if and only if $\Omega $ is isometric to a geodesic ball in $\mathbb{M}(k)$.
\end{abstract}

\maketitle
\section{Introduction}
Let $\overline{M}$ be an $n$-dimensional complete Riemannian manifold and $\Omega$ be a domain with smooth boundary $M$. 
The Steklov problem is to find a solution of 
\begin{equation}\label{p}
\begin{split}
\Delta f & = 0 \ \ \text{in} \ \Omega  \\ 
\frac{\partial f}{\partial \eta} & =  \nu(\Omega)f \ \  \text{on} \ M
\end{split}
\end{equation} 
where $\eta$ is the normal to $M$ and $\nu(\Omega)$ is a real number. 
This problem was first introduced for bounded domains in the plane by Steklov in 1902 \cite{s}. 
He was motivated by the physical problem of finding the steady state temperature on a bounded planar
 domain such that the flux on the boundary is proportional to the temperature. 
The solution of \eqref{p} in this case represents the temperature (see \cite{b} for other relations to
 physical problems). The problem \eqref{p} occurs  also in
harmonic analysis and inverse problems. Its importance in these areas lies in the fact 
that set of eigenvalues and eigenfunctions of Steklov problem is 
same as that of the well known Dirichlet-Neumann map, which associates to each function on $M$,
the normal derivative of its harmonic extension to $\Omega$ (see for instance \cite{c}).

It is known that the Steklov problem \eqref{p} has a discrete set of eigenvalues
\begin{equation*}
 0 < \nu_1 \leq \nu_2 \leq \nu_3 \leq \cdots \rightarrow \infty.
\end{equation*}
There are several results related to the sharp bounds and comparison of first eigenvalue $\nu_1(\Omega)$. 
For the planar domains, Weinstock \cite{w} proved that 
\begin{quote}
\textit{For all two-dimensional simply connected domains with analytic boundary of given area $A$, circle yields the maximum of $\nu_1$, that is}
$$
\nu_1 \leq \frac{2\pi}{A}.
$$                                                                                                                                                  
\end{quote} 
Hersch and Payne \cite{hp} noticed that Weinstock's proof gives a sharper isoperimetric inequality
$$
\frac{1}{\nu_1} + \frac{1}{\nu_2} \geq \frac{A}{\pi}.
$$
This result was extended to bounded domains in $\mathbb{R}^n$ by F.Brock \cite{fb}. 

In a series of papers J. F. Escobar \cite{e1, e2, e3} studied the geometry and isoperimetric problems of the first non zero eigenvalue of
 the Steklov problem
on a general manifold. In \cite{e3}, author proved the existence of lower bounds for $\nu_1$ under various curvature conditions.
For a bounded simply connected domain $\Omega$ in a 2-dimensional simply connected space form  $M(k),$ the sharp upper bound 
\begin{equation*}
\nu_1(\Omega) \leq \nu_1(B(p,r))
\end{equation*}
was obtained in \cite{e1}, where $B(p,r) \subset  M(k)$ is a geodesic ball of radius $r$ centered at $p$ with  $Area(\Omega) = Area(B(p,r))$. Further 
equality holds only when $\Omega$ is isometric to $B(p,r)$.
The proof of this inequality uses the Weinstock's inequality and the estimates of $\nu_1(B(p,r))$ for the case of two dimensional 
space forms obtained in \cite{e3}. 

In this paper we find the first eigenvalue of geodesic balls in rank-1 symmetric space of all dimensions and
extend the above theorem to noncompact rank-1 symmetric spaces. More precisely we prove following result.
\begin{theorem}\label{thm1}
Let $(\overline{M}, ds^2)$ be a noncompact rank-1 symmetric space with $ -4 \leq K_{\overline{M}} \leq -1$. 
Let $\Omega \subset \overline{M}$ be a bounded domain with smooth boundary $\partial \Omega = M$. Then 
\begin{equation}
 \nu_1(\Omega) \leq \nu_1(B(R))
\end{equation}
where $B(R) \subset \overline{M}$ is a geodesic ball of radius $R>0$ such that $Vol(\Omega) = Vol(B(R))$. 

Further, the equality holds if and only if $\Omega$ is isometric to $B(R)$.
\end{theorem}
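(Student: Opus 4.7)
My plan is to adapt the classical Weinstock/Brock variational strategy to the rank-1 symmetric setting: construct a family of test functions on $\Omega$ by transplanting the first Steklov eigenfunctions of $B(R)$ via the isometry group of $\overline{M}$, use a Hersch-type centering argument, and reduce the resulting Rayleigh ratio to a radial comparison on $\overline{M}$.

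The first step is to compute $\nu_1(B(R))$ and describe its eigenspace explicitly. Because the isotropy subgroup at the center of $B(R)$ acts transitively by isometries on each concentric geodesic sphere, the Steklov problem separates: eigenfunctions split as $h(r)\phi(\theta)$ with $\phi$ a joint eigenfunction of the induced Laplacian on the geodesic sphere lying in an isotropy-irreducible subspace. The first nonzero eigenvalue $\nu_1(B(R))$ is produced by the ``smallest'' nontrivial irreducible component (the analog of the first spherical harmonics), giving an eigenspace $W$ of dimension $m\ge 2$ spanned by functions $u_i = h(r)\phi_i$, where $h$ is the first nontrivial solution of the radial ODE determined by the Jacobi fields of $\overline{M}$ together with the Steklov boundary condition.

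Next I would transplant these eigenfunctions to $\overline{M}$ and center. For each base point $p\in\overline{M}$, use an isometry of $\overline{M}$ sending the center of $B(R)$ to $p$ to transport $u_1,\dots,u_m$ to globally defined functions $F_1^p,\dots,F_m^p$ on $\overline{M}$. A Hersch-type degree argument on the isometry group of $\overline{M}$ (exploiting its transitivity on points and the contractibility of $\overline{M}$) furnishes a $p$ such that $\int_{\partial\Omega}F_i^p=0$ for every $i$. The variational characterization then gives
\begin{equation*}
\nu_1(\Omega)\sum_i\int_{\partial\Omega}(F_i^p)^2\ \leq\ \sum_i\int_\Omega|\nabla F_i^p|^2.
\end{equation*}
Because $\phi_1,\dots,\phi_m$ span an isotropy-invariant subspace, the sums $\sum_i(F_i^p)^2 = A(r_p)$ and $\sum_i|\nabla F_i^p|^2 = B(r_p)$ are radial functions of the distance $r_p$, and the eigenvalue equation on $B(R)$ delivers the identity $\int_{B(R)} B(r)\,dv = \nu_1(B(R))\int_{\partial B(R)} A(r)\,dS$.

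To finish, I would establish the two radial comparisons
\begin{equation*}
\int_\Omega B(r_p)\,dv\ \leq\ \int_{B(R)} B(r)\,dv,\qquad \int_{\partial\Omega} A(r_p)\,dS\ \geq\ \int_{\partial B(R)} A(r)\,dS,
\end{equation*}
under the normalization $\mathrm{Vol}(\Omega)=\mathrm{Vol}(B(R))$. The first is Schwarz symmetrization on $\overline{M}$ and needs $B(r)$ monotone decreasing; the second is the analog of Brock's boundary estimate and needs the right convexity/monotonicity of $A(r)$. Combined with the identity above, these give $\nu_1(\Omega)\leq\nu_1(B(R))$, and the rigidity of Schwarz symmetrization together with strict monotonicity forces $\Omega$ isometric to $B(R)$ in the equality case. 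The principal obstacle I foresee is verifying the sign and monotonicity of the radial function $B(r)$: this reduces to controlling the ODE satisfied by $h(r)$ under the pinched curvature hypothesis $-4\leq K_{\overline{M}}\leq -1$, and this is precisely the step where the rank-1 structure (rather than a mere sectional-curvature comparison) enters decisively, since the explicit Jacobi-field formulas are needed to pin down the sign of $h'(r)$ and to couple it correctly with the angular factor.
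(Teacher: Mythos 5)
Your plan is essentially the paper's own proof: the transplanted ball eigenfunctions are the test functions $g\,\frac{x_i}{r}$ with $g(r)=\phi(r)^{-1}\int_0^r\phi(t)\,dt$, your centering step is done there via Heintze's center-of-mass lemma (Lemma \ref{l1}) rather than a degree argument on the isometry group (the same device in different form), and your two radial comparisons are precisely Lemmas \ref{l2} and \ref{l3} — the boundary estimate, obtained from $dm=\sec\theta\,\phi\,du$ and the monotone increase of $2gg'+g^2\,Tr(A)$, and the bulk symmetrization, using that $g^2\lambda_1(S(r))+(g')^2$ is decreasing — after which the resulting ratio is identified with $\nu_1(B(R))$ by Theorem \ref{thm3}. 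The obstacle you single out, the sign and monotonicity of the radial energy density, is indeed the crux, and the paper settles it by explicit computation with the rank-1 volume density $\phi(r)=\sinh^{kn-1}r\,\cosh^{k-1}r$ in Lemma \ref{l3}.
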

Now we move onto the comparison of first nonzero eigenvalue of Steklov problem. 

Let $\Omega $ be a bounded simply connected domain in a complete Riemannian manifold of dimension $2$ with non-positive Gaussian curvature, then 
$$
\nu_1(\Omega) \leq \nu_1(B(R))
$$ 
where $B(R) \subset \mathbb{R}^2$ is a  ball with center at $0$ and radius $R>0$ such that $Vol(\Omega) = Vol(B(R))$. This result was proved 
in \cite{e1} using Weyl's isoperimetric inequality on non-positive curvature manifolds. If $\Omega$ is a geodesic ball, then
following generalization was obtained in \cite{e2};
\begin{equation}\label{ie1}
 \nu_1(B(R)) \leq \nu_1(B_k(R))
\end{equation}
where $B(R), \, R >0$ is a geodesic ball in a complete Riemannian manifold of dimension $2$ or $3$ with the radial curvatures bounded by a constant $k$ 
and $B_k(R)$ is the geodesic ball of radius $R >0$ in the simply connected space form $ \mathbb{M}(k)$ such
that $Vol(B(R)) = Vol(B_k(R))$. Equality in the above inequality holds only if $B(R)$ is isometric to $B_k(R)$. Under some extra condition on 
first eigenvalue of laplacian of the geodesic sphere $S(R)$, above inequality \eqref{ie1} was obtained for arbitrary dimension. Author first proved the 
inequality for arbitrary dimensions and in dimension $2$ and $3$, verified the extra condition imposed on first eigenvalue of laplacian of
the geodesic spheres.  We prove the following similar result  
\begin{theorem}\label{thm2}
Let $(\overline{M}, \bar{g})$ be complete, simply connected manifold of dimension $n$ such that $K_{\overline{M}} \leq k, \, k = -\delta^2$ or $0$,
where $K_{\overline{M}}$ denotes the sectional curvature of $\overline{M}$. Let $\Omega$ be a bounded domain with smooth boundary
$\partial \Omega = M$. Then 
\begin{equation*}
 \nu_1(\Omega) \leq C_k \ \nu_1(B_k(R_k))
\end{equation*}
where $B_k(R_k)$ be a geodesic ball of radius $R_k> 0$ in the simply connected space form $\mathbb{M}(k)$
such that $Vol(\Omega) = B_k(R_k)$ and $C_k \geq 1 $ is a constant which depends only on the volume of $\Omega$ and the dimension of $\mathbb{M}$. 

Further, the equality holds if and only if $\Omega $ is isometric to a geodesic ball in $\mathbb{M}(k)$.
\end{theorem}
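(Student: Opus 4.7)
The strategy is the classical Weinberger--Hersch scheme: build a family of admissible test functions on $\Omega$ from the first Steklov eigenfunction on the model ball $B_k(R_k)$, plug them into the Rayleigh characterization
\[
\nu_1(\Omega) \;=\; \inf\left\{\frac{\int_\Omega |\nabla f|^2\, dV}{\int_M f^2\, d\sigma} \;:\; f\in H^1(\Omega),\ \int_M f\, d\sigma = 0\right\},
\]
and control both sides using the curvature hypothesis. Recall that on $B_k(R_k)\subset \mathbb{M}(k)$ the first Steklov eigenvalue has multiplicity $n$, with an $L^2$-orthogonal basis of eigenfunctions of the form $\varphi(r)\theta_i$ where $r$ is the distance from the centre, $\theta_1,\dots,\theta_n$ are the Euclidean coordinates of the unit radial vector, and $\varphi$ is the unique solution of a second-order ODE (involving $k$ and $n$) with $\varphi'(R_k)=\nu_1(B_k(R_k))\varphi(R_k)$.

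Since $\overline{M}$ is simply connected with $K_{\overline M}\le k\le 0$, Cartan--Hadamard gives that $\exp_p: T_p\overline M\to\overline M$ is a diffeomorphism for every $p\in\overline M$. Choose a basepoint $p$ (momentarily free) and set
\[
f_i(x) \;=\; \tilde\varphi\bigl(r(x)\bigr)\,\theta_i(x), \qquad r(x)=d(p,x),\quad \theta_i(x)=\left\langle e_i,\,\frac{\exp_p^{-1}(x)}{|\exp_p^{-1}(x)|}\right\rangle,
\]
where $\{e_i\}$ is an orthonormal basis of $T_p\overline M$ and $\tilde\varphi$ is an appropriate radial extension of $\varphi$ to $[0,\infty)$ (for instance a monotone extension beyond $R_k$ calibrated to the diameter of $\Omega$, which is bounded by a quantity depending only on $Vol(\Omega)$ via the Weyl/Cartan--Hadamard volume comparison). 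A standard Hersch-type degree argument applied to the map $p\mapsto \bigl(\int_M f_i\, d\sigma\bigr)_{i=1}^n$ produces a choice of $p$ making all $f_i$ admissible test functions.

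Summing the Rayleigh inequalities and using $\sum_i\theta_i^2=1$ gives
\[
\nu_1(\Omega)\int_\Omega \tilde\varphi(r)^2\cdot(\text{boundary factors}) \;\le\; \int_\Omega\left(\tilde\varphi'(r)^2 \;+\; \tilde\varphi(r)^2\sum_i |\nabla\theta_i|^2\right)dV.
\]
The crucial input is the Hessian comparison theorem: under $K_{\overline M}\le k$ one has $\mathrm{Hess}(r)\ge \frac{s_k'(r)}{s_k(r)}(\bar g-dr\otimes dr)$ (with $s_k$ the appropriate Jacobi function for $\mathbb{M}(k)$), whence $\sum_i |\nabla\theta_i|^2\le (n-1)/s_k(r)^2$, exactly matching the model. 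Thus the numerator is majorised by the ``model'' radial integrand. Combined with a Schwarz-type symmetrization using $Vol(\Omega)=Vol(B_k(R_k))$ and the monotonicity of $\tilde\varphi$, the integral over $\Omega$ is compared to the corresponding integral over $B_k(R_k)$; the denominator is treated by a similar boundary comparison. The discrepancy between the two radial integrals, which is dictated only by how much of $\Omega$ sits outside a radius-$R_k$ ball around $p$, produces the multiplicative constant $C_k\ge 1$, depending only on $Vol(\Omega)$ and $n$.

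The main obstacle I anticipate is the last step: extracting a \emph{single} constant $C_k$, depending only on $Vol(\Omega)$ and $n$, from the surplus created by the extension of $\varphi$ past $R_k$ and the mismatch between $\partial\Omega$ and $\partial B_k(R_k)$; this is where the hypotheses on $k$ (so that $s_k$ is convex and the model Steklov eigenfunction is well-behaved at infinity) are essential. For the rigidity statement, equality forces (i) Hersch's centring to be degenerate, (ii) equality in Hessian comparison along every radial geodesic from $p$ into $\Omega$ (so $\overline M$ has constant curvature $k$ on that region), and (iii) equality in the symmetrization (so $\Omega$ is a metric ball). These three together give that $\Omega$ is isometric to $B_k(R_k)\subset\mathbb{M}(k)$.
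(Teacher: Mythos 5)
Your overall scheme is in fact the paper's: test functions $g_\delta(r)\frac{x_i}{r}$ made admissible by a centering argument (the paper uses Heintze's center-of-mass lemma rather than a degree argument, but this is equivalent in role), a Rauch/Hessian comparison giving $\sum_i\|\nabla^{S(r)}(x_i/r)\|^2\le (n-1)/\sin_\delta^2 r$, and a rearrangement of a monotone radial integrand. However, two of your steps have genuine problems. First, your ``monotone extension of $\varphi$ beyond $R_k$ calibrated to the diameter of $\Omega$, which is bounded by a quantity depending only on $Vol(\Omega)$'' rests on a false claim: in a Cartan--Hadamard manifold, volume comparison controls the inradius of $\Omega$, not its diameter (a long thin tube has small volume and arbitrarily large diameter), so any constant extracted this way would not depend only on $Vol(\Omega)$ and $n$. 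The extension is also unnecessary: the paper takes the globally defined radial function $g_\delta(r)=\sin_\delta^{1-n}r\int_0^r\sin_\delta^{n-1}t\,dt$, which solves the relevant ODE on all of $(0,\infty)$ and satisfies $g_\delta'(R)=\nu_1(B_k(R))\,g_\delta(R)$ for every $R$, so no extension or diameter control ever enters.

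Second, the two comparisons you compress into single phrases are the actual content of the proof, and as stated they do not yield the theorem. The boundary bound (``treated by a similar boundary comparison'') is the paper's Lemma 3.4(2): one writes $dm=\sec\theta\,\phi\,du$, restricts to the outermost radial points of $M$, transplants to $\Omega_k=\exp_{p_k}(i(\exp_p^{-1}\Omega))\subset\mathbb{M}(k)$ via Rauch and Gunther, and uses monotonicity of $2g_\delta g_\delta'+g_\delta^2\phi_\delta'/\phi_\delta$ to get $\int_M g_\delta^2\,dm\ge Vol(S_k(R_k'))\,g_\delta^2(R_k')$, where $R_k'$ is determined by $Vol(\Omega_k)=Vol(B_k(R_k'))$ and is in general different from $R_k$. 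For the interior term, what is needed is not monotonicity of $\tilde\varphi$ but the nontrivial fact (the paper's Lemma 3.6) that $g_\delta^2\lambda_1(S_k(r))+(g_\delta')^2$ is decreasing, which allows comparison of $\int_\Omega$ with the integral over the \emph{ambient} ball $B(p,R_k)\subset\overline{M}$ (after using Gunther to get $Vol(\Omega)\le Vol(B(p,R_k))$). The constant is then the explicit ratio
\begin{equation*}
C_k=\frac{g_\delta^2(R_k)\,\phi_\delta(R_k)}{g_\delta^2(R_k')\,\phi_\delta(R_k')}\cdot
\frac{\int_{B(p,R_k)}\bigl(g_\delta^2\lambda_1(S_k(r))+(g_\delta')^2\bigr)dV}{\int_{B_k(R_k)}\bigl(g_\delta^2\lambda_1(S_k(r))+(g_\delta')^2\bigr)dV}\ \ge 1,
\end{equation*}
whereas your description of $C_k$ (surplus from the extension plus mismatch of boundaries) does not define a constant, does not give $C_k\ge1$, and, tied as it is to the diameter, would not have the claimed dependence. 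Finally, the rigidity statement requires the equality analysis inside the boundary lemma ($\eta=\partial r$ on all of $M$, equality in Gunther forcing $B(p,R_k')$ isometric to $B_k(R_k')$, and then $C_k=1$); your three bullet conditions point in the right direction but are not derived from your inequalities, since the lemma they would have to come from is the step you skipped.
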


We refer to \cite{dc} and \cite{ic} for the basic Riemannian geometry used in this paper.
\section{First eigenvalue of geodesic balls}
Let $(\overline{M}, ds^2)$  denote a rank-1 symmetric space of compact or
noncompact type with dimension, $\text{dim}\overline{M} = kn $ where $k =
\text{dim}_{\mathbb{R}}\mathbb{K}; \ \mathbb{K} = \mathbb{R}, \mathbb{C},
\mathbb{H}$ or $\mathbb{C}a$. For compact type we take the
metric $ds^2$ so that $ 1 \leq K_{\overline{M}} \leq 4 $ and for 
non-compact type we take the metric $ds^2$ so that $ -4 \leq
K_{\overline{M}} \leq -1 $. We denote by $inj(\overline{M}) $, the injectivity radius of $\overline{M}$. For a point $p \in \overline{M}$ 
and $0 < r < inj(\overline{M})$, let $S(r)$ denotes the geodesic sphere
of radius $r$ centered at $p$ and $\Delta_{S(r)}$ denotes the laplacian on
$S(r)$. It is well known [see \cite{gs} for details] that for compact rank-1
symmetric spaces
$$
\lambda_1(S(r)) = \frac{kn-1}{\sin^2r} + \frac{k-1}{\cos^2r}, \ \ \text{for} \ \ 0 < r < \tan^{-1}\left (\sqrt{\frac{kn+1}{k-1}}\right )
$$
and for noncompact rank-1 symmetric spaces 
$$
 \lambda_1(S(r)) = \frac{kn-1}{\sinh^2r} - \frac{k-1}{\cosh^2r} \ \ \ \   \forall \ r > 0
$$
where $ \lambda_1(S(r))$ is the first nonzero eigenvalue of $\Delta_{S(r)}$.

Fix a point $p \in \overline{M}.$ The geodesic polar coordinate system centered at $p$ is denoted by $(r,u)$ where $r> 0$ and $u \in U_p\overline{M}$.
Let $\gamma$ be a geodesic starting at $p$. Then the volume density function $\phi$ along $\gamma$ at the point 
$\gamma(r)$ is given by 
$$
\phi(r) = 
\begin{cases}
 \sin^{kn-1}r\,\cos^{k-1}r & \text{when} \ \overline{M} \ \text{is compact type} \\
 \sinh^{kn-1}r\,\cosh^{k-1}r & \text{when} \ \overline{M} \ \text{is noncompact type}
\end{cases}.
$$ 
Let $S(r)$ denotes concentric spheres with center at $p$ and having radius $r, 0 < r < inj(\overline{M})$. We denote by $A(r)$, the second fundamental form
of $S(r)$. It can be shown that the mean curvature $Tr(A(r))$ of
$S(r)$ is given by $\frac{\phi^{\prime}(r)}{\phi (r)}.$
Let $0 < R < inj(\overline{M})$ be given and $B(R)$ be the geodesic ball of radius $R$ centered at $p$. 

Consider the Steklov eigenvalue problem on $B(R)$: 

Find a solution of the problem 
\begin{equation}\label{pb}
\begin{split}
\Delta f & = 0 \ \ \text{in} \ B(R)  \\ 
\frac{\partial f}{\partial r} & =  \nu(B(R))f \ \  \text{on} \ S(R)
\end{split}
\end{equation} 
where $\nu(B(R))$ is a real number. To find an eigenfunction of \eqref{pb}, first decompose
the laplacian $\Delta$ on $\overline{M}$ along the radial geodesics starting from $p$ as 
$$
\Delta = - \frac{\partial^2}{\partial r^2} - Tr(A(r))\frac{\partial}{\partial r} + \Delta_{S(r)}.
$$
We use the separation of variable technique to find a solution of \eqref{pb}. Consider a smooth function on $B(R)$ given by $h(r,u) = g(r)f(u)$
where $g$ and $f$ are real valued functions defined from $[0,inj(\overline{M}))$ and $U_p\overline{M}$ respectively. Then
\begin{eqnarray}\nonumber
 \Delta h(r, u) & = & - \frac{\partial^2(h(r, u))}{\partial r^2} - Tr(A(r))\frac{\partial (h(r, u))}{\partial r} + \Delta_{S(r)}h(r, u) \\
& = & f(u)\left[-g^{\prime \prime}(r) - Tr(A(r))g^{\prime}(r) \right] + g(r)\Delta_{S(r)}f(u)\nonumber.
\end{eqnarray}
Now suppose that $f$ is an eigenfunction of $\Delta_{S(r)}$ with eigenvalue $\lambda(S(r))$. Then above equation becomes 
\begin{equation*}
 \Delta h(r, u) = f(u)\left[-g^{\prime \prime}(r) - Tr(A(r))g^{\prime}(r) + g(r)\lambda(S(r))\right].
\end{equation*}
Hence we get a solution $h(r, u) = g(r)f(u)$ of \eqref{pb}, where $g$ satisfies 
\begin{equation}\label{e3}
\begin{gathered}
 g^{\prime \prime}(r) + Tr(A(r))g^{\prime}(r) - \lambda(S(r))g(r) = 0, \ \ r \, \in (0, R), \\ 
  g(0) = 0 \ \ \text{and} \ \ g^{\prime}(R) =  \nu(B(R)) g(R).
\end{gathered} 
\end{equation}
Next theorem shows that by taking eigenfunctions corresponding to first eigenvalue $\lambda_1(S(r)) $ of $\Delta_{S(r)}$ 
and $g$ to be corresponding solution of \eqref{e3} we can find the value of $\nu_1(B(R))$.

Before stating the theorem we remark that our notations are similar to the proof given in \cite{e2},
which gives the first nonconstant eigenfunction corresponding to $\nu_1(B(R))$, where $B(R)$ is a geodesic ball of radius $R > 0 $ 
in the Euclidean space $\mathbb{R}^n$ with a rotationally symmetric metric. Also in \cite{e3} the value of $\nu_1(B(R))$ 
is obtained for two dimensional sphere of constant curvature $+1$ and two dimensional hyperbolic space. 
The computation there uses relation between the first nonzero eigenvalues of Steklov problem for conformally
related metrics. Our result gives a rather easy way to find $\nu_1(B(R))$ of geodesic balls in all rank-1 symmetric spaces of any dimension. 
We also remark that arguments given in \cite{e2} and \cite{e3} are analytical in nature where as our arguments are more geometrical. 
\begin{theorem}\label{thm3}
Let $(\overline{M}, ds^2)$ be a rank-1 symmetric space and $B(R)$ be a geodesic ball centered at a point $p \in \overline{M}$
with radius $R$ such that $0< R < inj(\overline{M})$. Then the first non zero eigenvalue $\nu_1(B(R))$ of the Steklov problem on $B(R)$ is given by
\begin{equation*}
 \nu_1(B(R)) = \frac{\int_{B(p, R)}\left(g^2\lambda_1(S(r)) + \left(g^{\prime}\right)^2\right)}{g^2(R)Vol(S(R))}.
\end{equation*}
where $g$ is the radial function satisfying
\begin{equation}\label{e4}
\begin{gathered}
g^{\prime \prime}(r) + Tr(A(r))g^{\prime}(r) - \lambda_1(S(r))g(r) = 0, \ \  r \, \in (0, R), \\
g(0) = 0 \ \ \text{and} \ \ g^{\prime}(R) =  \nu_1(B(R)) g(R).
\end{gathered}
\end{equation}
\end{theorem}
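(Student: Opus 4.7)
The plan is to verify that the separated ansatz $h(r,u) = g(r)f(u)$, with $g$ solving (\ref{e4}) and $f$ a first nonzero eigenfunction of $\Delta_{S(r)}$, is a Steklov eigenfunction on $B(R)$; to identify the associated eigenvalue with the claimed Rayleigh-type quotient via a Green identity; and then to argue through the spectral decomposition of the boundary Laplacian that this eigenvalue is the smallest nonzero one. The harmonicity of $h$ follows directly from the separation-of-variables computation given before the theorem, and the condition $g'(R) = \nu_1(B(R)) g(R)$ built into (\ref{e4}) is precisely the Steklov condition on $S(R)$. The rank-1 structure plays a crucial role here: the isotropy group $K_p \subset \mathrm{Isom}(\overline{M})$ fixes $p$ and acts by isometries on every $S(r)$, so the operators $\{\Delta_{S(r)}\}_{r>0}$ are simultaneously diagonalisable by a single $L^2$-basis pulled back from $U_p\overline{M}$, and the ansatz $h = g(r)f(u)$ with a fixed $f$ is meaningful.

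To derive the formula, I would multiply (\ref{e4}) by $g(r)\phi(r)$ and integrate from $0$ to $R$. Using $Tr(A(r))\phi(r) = \phi'(r)$, the first two terms collapse via $g''g\phi + Tr(A)\,g'g\phi = (gg'\phi)' - (g')^2\phi$, and the boundary contribution at $r=0$ vanishes since $g(0) = \phi(0) = 0$. Inserting $g'(R) = \nu_1(B(R)) g(R)$ then yields
\[
\nu_1(B(R))\, g^2(R)\, \phi(R) \;=\; \int_0^R \left[(g')^2 + \lambda_1(S(r))\, g^2\right]\phi(r) \, dr.
\]
Multiplying both sides by $Vol(U_p\overline{M})$ converts $\int_0^R (\cdot) \phi\, dr$ into $\int_{B(R)}(\cdot)\, dV$ (the integrand depends only on $r$) and $\phi(R)$ into $Vol(S(R))$, and dividing by $g^2(R)\,Vol(S(R))$ produces the asserted formula.

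To confirm that $\mu_1(R) := g'(R)/g(R)$ is genuinely the first nonzero Steklov eigenvalue, I would expand an arbitrary Steklov eigenfunction in the common eigenbasis $\{f_i\}_{i\ge 0}$ of $\{\Delta_{S(r)}\}$ as $h(r,u) = \sum_i g_i(r) f_i(u)$. Orthogonality decouples the problem into a sequence of radial ODEs of the form (\ref{e4}) with $\lambda_i(S(r))$ and initial condition $g_i(0)=0$ (forced by smoothness at $p$ whenever $f_i$ is nonconstant), so the positive Steklov spectrum is exhausted by $\{\mu_i(R)\}_{i\ge 1}$ where $\mu_i(R) := g_i'(R)/g_i(R)$. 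Repeating the Green identity above produces the Rayleigh characterisation
\[
\mu_i(R) \;=\; \inf\!\left\{\frac{\int_0^R [(\psi')^2 + \lambda_i(S(r))\psi^2]\phi \, dr}{\psi^2(R)\phi(R)} : \psi(0)=0,\; \psi(R)\neq 0\right\},
\]
which is monotone nondecreasing in $i$ because $\lambda_1(S(r)) \le \lambda_i(S(r))$ pointwise for $r \in (0,R)$. Combined with the trivial eigenvalue coming from the constant mode, this identifies $\nu_1(B(R))$ with $\mu_1(R)$.

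The principal obstacle will be justifying the simultaneous diagonalisation of the family $\{\Delta_{S(r)}\}$ by a single $r$-independent basis on $U_p\overline{M}$; this is the geometric input that restricts the approach to rank-1 symmetric spaces, and once it is in hand the remaining steps reduce to standard Sturm-Liouville comparison.
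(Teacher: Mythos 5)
Your proposal is correct and follows the same overall skeleton as the paper's proof: separate variables using an $r$-independent eigenbasis of the geodesic spheres (the point you rightly flag as the rank-1 input; the paper simply chooses the $e_i$ ``constant along radial directions'', citing \cite{gs}), reduce to the radial problems $g_i''+Tr(A)g_i'-\lambda_i(S(r))g_i=0$, $g_i(0)=0$, and identify $\nu_1(B(R))$ with $g_1'(R)/g_1(R)$ by comparing modes. You diverge in two details. First, the formula itself: you get it by multiplying the ODE by $g\phi$, integrating on $(0,R)$, and multiplying by $Vol(U_p\overline{M})$ --- a purely one-dimensional Sturm--Liouville identity --- whereas the paper substitutes the explicit first eigenfunctions $x_j/r$ of $S(r)$ into its expression for $\beta_1$ and sums over $j$ using $\sum_j x_j^2/r^2=1$; both are valid, and yours avoids invoking the explicit eigenfunctions at this step. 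Second, the comparison $\mu_i\ge\mu_1$: you reduce it to a radial Rayleigh-quotient characterization of $\mu_i$, which is the honest statement of what is needed; note, however, that ``repeating the Green identity'' only shows that $\psi=g_i$ attains the value $\mu_i$, i.e.\ the infimum is $\le\mu_i$, and the reverse inequality requires an extra argument (e.g.\ the ground-state substitution $\psi=g_i w$, using that $g_i>0$ and $g_i'>0$ on $(0,R]$, which follows from $(g_i'\phi)'=\lambda_i g_i\phi$, or an existence-of-minimizer argument). The paper's corresponding step --- asserting $\beta_i\ge\beta_1$ from $\lambda_i(S(r))\ge\lambda_1(S(r))$ even though its formula for $\beta_i$ involves $g_i$ rather than $g_1$ --- implicitly relies on the same variational fact, so your version is, if anything, more explicit about where the remaining work lies.
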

\begin{proof}
We use the following variational characterization to estimate $\nu_1(B(R))$ and corresponding eigen functions
$$
 \nu_1(B(R)) = min\left\{ \frac{\int_{B(R)}\parallel \nabla h\parallel^2}{\int_{S(R)} h^2} \ \big{\vert} \ \int_{S(R)} h = 0 \right\}.
$$
First observe that the space $L^2(B(R))$ is equal to $L^2(0, R) \times L^2(S)$, where $S$ is the unit sphere in $T_p\overline{M}$. 
Let $\{e_i\}_{i=1}^{\infty}$ be a complete orthogonal set of eigenfunctions for the laplacian
$\Delta_{S(r)}$ of $S(r)$ with associated eigenvalues $\lambda_i(S(r))$ such that $ 0 = \lambda_0(S(r))
< \lambda_1(S(r)) \leq \lambda_2(S(r)) \cdots$. Observe that we can choose these functions such that they are constant along radial directions.
For $i \geq 1$, let  $g = g_1, g_2, \cdots $ be functions satisfying
\begin{equation*}
g_i^{\prime \prime}(r) + Tr(A(r))g_i^{\prime}(r) - \lambda_i(S(r))g_i(r) = 0 \ \ r\, \in (0, R)
\end{equation*}
$$
g_i(0) = 0, \ \ \ \ g_i^{\prime}(R) = \beta_ig_i(R).
$$
Let $h_0 = 1 $ and $h_i(r,s) = g_i(r)e_i(u)$. Then the set $\{h_i\}_{i=1}^{\infty}$ is an orthogonal basis for $L^2(B(R))$.
Notice that $\Delta h_i = 0 $ on $B(R)$ for all $ i \geq 1$ and
$$
\int_{S(R)}h_i =  g_i\int_{S(R)}e_i = \frac{g_i}{\lambda_i(S(R))}\int_{S(R)}\Delta_{S(R)}e_i = 0.
$$
Now
\begin{eqnarray*}
 \int_{B(R)}\parallel \nabla h_i\parallel^2 & = & -\frac{1}{2}\int_{B(R)}\Delta(h_i^2)\\
 & = & \int_{S(R)} h_i\innprod{\nabla h_i}{\partial r}.
 \end{eqnarray*}
As $\innprod{\nabla e_i}{\partial r} = 0 $, we get
\begin{equation*}
 \int_{B(R)}\parallel \nabla h_i\parallel^2 = g_i(R)g_i^{\prime}(R)\int_{S(R)} e_i^2.
\end{equation*}
Thus,
\begin{equation}\label{e5}
\frac{\int_{B(R)}\parallel \nabla h_i\parallel^2}{\int_{S(R)} h_i^2} = \frac{g_i(R)g_i^{\prime}(R)\int_{S(R)} e_i^2}{g_i^2(R)\int_{S(R)} e_i^2}
= \frac{g_i^{\prime}(R)}{g_i(R)} = \beta_i.
\end{equation}
But we also have 
\begin{eqnarray*}
 \parallel \nabla h_i\parallel^2 & = & \parallel \nabla^{S(r)} h_i\parallel^2 + \innprod{\nabla h_i}{\partial r}^2 \\
& = & g_i^2\parallel \nabla^{S(r)} e_i\parallel^2  + e_i^2\,(g_i^{\prime})^2 \\
& = & -\frac{1}{2}g_i^2 \Delta_{S(r)}(e_i^2) + \lambda_i(S(r))g_i^2\,e_i^2 + e_i^2\,(g_i^{\prime})^2 \\
& = &  g_i^2\left(\lambda_i(S(r))e_i^2 -\frac{1}{2}\Delta_{S(r)}(e_i^2)\right)  + e_i^2\,(g_i^{\prime})^2\\
& = &  g_i^2\left(\lambda_i(S(r))e_i^2 -\frac{1}{2}\Delta(e_i^2)\right)  + e_i^2\,(g_i^{\prime})^2.
\end{eqnarray*}
The last equality follows as $e_i$'s are constant along the radial directions. Substituting this in \eqref{e5} we get
\begin{eqnarray*}
 \beta_i & = & \frac{\int_{B(R)}\parallel \nabla h_i\parallel^2}{\int_{S(R)} h_i^2} \\
& = & \frac{\int_{B(R)}\left [g_i^2\left(\lambda_i(S(r))e_i^2 -\frac{1}{2}\Delta(e_i^2)\right) + e_i^2\,(g_i^{\prime})^2\right]}{\int_{(R)} h_i^2}.
\end{eqnarray*}
But 
\begin{equation*}
-\frac{1}{2}\int_{B(R)}\Delta(e_i^2) = \int_{S(R)}e_i\innprod{\nabla e_i}{\partial r} = 0.
\end{equation*}
Hence we have 
\begin{equation*}
 \beta_i = \frac{\int_{B(R)}\left [g_i^2\lambda_i(S(r))e_i^2  + (g_i^{\prime})^2e_i^2\right]}{\int_{S(R)} h_i^2}.
\end{equation*}
Since $\lambda_i(S(r)) \geq \lambda_1(S(r))$ for $i \geq 1$, we get that $\beta_i \geq \beta_1$. 
Also all admissible functions in the variational characterization 
of $\nu_1(B(R))$ are orthogonal to constant functions on $S(R)$. Hence we get that $ \nu_1(B(R)) = \beta_1 $. 

Now consider the geodesic normal coordinate system $X = (x_1, x_2, \cdots, x_{kn})$ center at $p$. Then the functions $\frac{x_j}{r}, j= 1, 2, \cdots, kn$
are eigenfunctions corresponding to the first eigenvalue $\lambda_1(S(r))$ of the geodesic sphere $S(r)$ (see \cite{gs} for details). 
Thus from above equation we get
\begin{equation*}
 \nu_1(B(R))\int_{S(R)} g^2\frac{x_j^2}{r^2} = \int_{B(R)}\left [g^2\lambda_1(S(r))\frac{x_j^2}{r^2}  + (g^{\prime})^2\frac{x_j^2}{r^2}
\right].
\end{equation*}
Summation over $j = 1,2, \cdots , kn$ gives
\begin{equation}\label{rs}
 \nu_1(B(R)) = \frac{\int_{B(p, R)}\left(g^2\lambda_1(S(r)) + \left(g^{\prime}\right)^2\right)}{g^2(R)Vol(S(R))}.
\end{equation} 
\end{proof}
\begin{remark}\label{r4}
Notice that the estimate 
\begin{equation*}
 \beta_i = \frac{\int_{B(R)}\left [g_i^2\lambda_i(S(r))e_i^2  + (g_i^{\prime})^2e_i^2\right]}{\int_{S(R)} h_i^2}.
\end{equation*}
is true for any manifold in which eigenfunctions of geodesic spheres are constant along the radial directions.
In particular for $B(R) \subset \mathbb{R}^n$, as the functions $\frac{x_j}{r}, j= 1, 2, \cdots, n$
are eigenfunctions corresponding to the first eigenvalue $\lambda_1(S(r))$ of the geodesic sphere $S(r)$, we see that equation \eqref{rs}
is also valid. A straight forward computation then shows that $\nu_1(B(R)) = \frac{1}{R}$.
\end{remark}
When dimension of $\overline{M} = 2,$ and $(\overline{M}, ds^2)$ is a sphere of constant curvature $+1$ or a hyperbolic space, we have 
\begin{corollary}\label{cor1}
Let $(\overline{M}, ds^2)$ be a two dimensional sphere of constant curvature $+1$ or a two dimensional hyperbolic space and
let $p \in \overline{M}$. For $ 0<R<inj(\overline{M})$, consider the Steklov problem on the geodesic ball $B(R)$. 
\begin{enumerate}
 \item If $(\overline{M}, ds^2)$ is sphere, then 
$$
\nu_1(B(R)) = \frac{1}{\sin R}.
$$
\item If $(\overline{M}, ds^2)$ is hyperbolic space, then 
$$
\nu_1(B(R)) = \frac{1}{\sinh R}.
$$
\end{enumerate}
\end{corollary}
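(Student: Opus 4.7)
The plan is to apply Theorem \ref{thm3} directly in the two-dimensional setting, where both the curvature data and the density function simplify dramatically, then to solve the resulting ODE \eqref{e4} explicitly and read off $\nu_1(B(R))=g'(R)/g(R)$ from the boundary condition (as was shown in equation \eqref{e5} inside the proof of Theorem \ref{thm3}).

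First, I would specialize the data. In the notation of Section 2, a $2$-dimensional space form corresponds to $\mathbb{K}=\mathbb{R}$, so $k=1$ and $n=2$. Consequently the $(k-1)/\cos^2 r$ and $(k-1)/\cosh^2 r$ contributions to $\lambda_1(S(r))$ vanish, and the volume density reduces to $\phi(r)=\sin r$ on the sphere and $\phi(r)=\sinh r$ on hyperbolic space. Hence $\mathrm{Tr}(A(r))=\phi'/\phi$ equals $\cot r$ or $\coth r$, and $\lambda_1(S(r))$ equals $1/\sin^2 r$ or $1/\sinh^2 r$ respectively. Thus \eqref{e4} becomes
\begin{equation*}
g''(r)+\cot r\,g'(r)-\frac{1}{\sin^2 r}\,g(r)=0
\qquad\text{or}\qquad
g''(r)+\coth r\,g'(r)-\frac{1}{\sinh^2 r}\,g(r)=0,
\end{equation*}
in each case with $g(0)=0$.

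Second, I would exhibit an explicit solution. The natural ansatz is $g(r)=\tan(r/2)$ in the spherical case and $g(r)=\tanh(r/2)$ in the hyperbolic case; both vanish at $r=0$, and a short verification using the half-angle identities $\sin r = 2\sin(r/2)\cos(r/2)$, $1+\cos r = 2\cos^2(r/2)$ (and their hyperbolic analogues) shows that these functions satisfy the corresponding ODE. Given such a $g$, Theorem \ref{thm3} says
\begin{equation*}
\nu_1(B(R)) \;=\; \frac{g'(R)}{g(R)}.
\end{equation*}

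Finally, I would compute this ratio. For the sphere, $g'(r)=\tfrac12\sec^2(r/2)=1/(1+\cos r)$, so $g'(R)/g(R)=1/\bigl(2\sin(R/2)\cos(R/2)\bigr)=1/\sin R$. The hyperbolic computation is identical with hyperbolic functions in place of trigonometric ones, giving $1/\sinh R$. The only mild obstacle is producing the closed-form solution of the ODE, but the half-angle ansatz resolves this immediately; no uniqueness discussion is needed since Theorem \ref{thm3} already identifies the relevant radial factor.
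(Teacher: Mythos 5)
Your proposal is correct and follows essentially the same route as the paper: specialize Theorem \ref{thm3} to the two-dimensional case, identify the regular radial solution $g(r)=\tan(r/2)$ (resp. $\tanh(r/2)$) of \eqref{e4}, and read off $\nu_1(B(R))=g'(R)/g(R)=1/\sin R$ (resp. $1/\sinh R$). Your write-up is just a fleshed-out version of the paper's ``easy computation,'' and it even uses the correct $\lambda_1(S(r))=1/\sin^2 r$ where the paper's proof has a typographical $1/\sin r$.
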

\begin{proof}
If $(\overline{M}, ds^2)$ is sphere, we know that $\lambda_1(S(r)) = \frac{1}{\sin\,r}$ and $g(r) = \tan\frac{r}{2}$. An easy computation then
shows that $\nu_1(B(R)) = \frac{1}{\sin R}.$ The case of hyperbolic space is dealt similarly.
\end{proof}
\section{Proof of theorem \ref{thm1} and theorem \ref{thm2}}
We recall the notion of center of mass which is needed to prove our results.
 
Let $\overline{M}$ be a $n$ dimensional complete Riemannian manifold. For a point $p \in \overline{M}$, we denote by $c(p)$ the
convexity radius of $\overline{M}$ at $p$.
For a subset $A \subset B(q,c(q)) $, for $q \in \overline{M}$, we let $CA$ denote the convex hull of $A$. Let $exp_q:T_q\overline{M}
\rightarrow \overline{M}$ be the exponential map and
$ X = (x_1, x_2, ... , x_n)$ be the normal coordinate system at $q$. We identify $CA$ with $exp_q^{-1}(CA)$ and denote $g_q(X,X)$ as 
$\parallel \! X \! \parallel_q^2$ for $X \in T_q\overline{M}$. Following lemma gives the existence of a center of mass of any measurable subset 
of $\overline{M}$.
\begin{lemma}\label{l1}
Let $A$ be a measurable subset of $\overline{M}$ contained in $B(q_0, c(q_0))$ for some point $q_0 \in \overline{M}$. 
Let $G: [0, 2c(q_0)] \rightarrow \mathbb{R}$ be a 
continuous function such that $G$ is positive on $(0, 2c(q_0))$. Then there exists a point $p \in CA$ such that 
$$
\int_AG(\parallel \!X \!\parallel_p)XdV = 0
$$
where $X = (x_1, x_2, ... , x_n)$ is a geodesic normal coordinate system at $p$.
\end{lemma}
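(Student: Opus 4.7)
The plan is to realize the desired vanishing integral as the Euler--Lagrange equation of a natural scalar functional on $CA$, and then argue that this functional achieves its minimum at an interior critical point.

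First I would define the $C^1$ antiderivative $\Phi:[0,2c(q_0)]\to\mathbb{R}$ by $\Phi(t)=\int_0^t sG(s)\,ds$, so that $\Phi'(t)=tG(t)$. Since $A$ and $CA$ both sit inside $B(q_0,c(q_0))$, the triangle inequality gives $d(q,x)<2c(q_0)$ for $q\in CA$ and $x\in A$, so the functional $F:CA\to\mathbb{R}$ defined by $F(q)=\int_A \Phi(d(q,x))\,dV(x)$ is well defined and continuous, hence attains its minimum at some $p\in CA$ by compactness. For $q\in CA$ the exponential map $\exp_q$ is a diffeomorphism onto a neighborhood containing $A$, and $\nabla_q d(q,x)=-X_q/\|X_q\|_q$ with $X_q=\exp_q^{-1}(x)$. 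Differentiating under the integral sign,
$$
\nabla F(q)=-\int_A \Phi'(\|X_q\|_q)\,\frac{X_q}{\|X_q\|_q}\,dV(x)=-\int_A G(\|X_q\|_q)\,X_q\,dV(x).
$$
Any critical point of $F$ is therefore a point where the integral in the lemma vanishes, so the goal reduces to showing $\nabla F(p)=0$.

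Next I would verify that the minimizer $p$ really is a critical point. When $p$ lies in the relative interior of $CA$ this is immediate from first-order optimality. Otherwise, after pulling back to $T_p\overline{M}$ via $\exp_p^{-1}$, there is a supporting hyperplane with inward unit normal $v$ such that $\langle X_p,v\rangle\geq 0$ for every $x\in A$. Then
$$
\langle\nabla F(p),v\rangle=-\int_A G(\|X_p\|_p)\langle X_p,v\rangle\,dV(x)\leq 0,
$$
and strict inequality would decrease $F$ along the short geodesic $t\mapsto\exp_p(tv)$, which enters $CA$ for small $t>0$, contradicting minimality. Hence $\langle X_p,v\rangle=0$ almost everywhere on $A$, so $A$ and $CA$ both lie in the hyperplane $v^\perp$ and $\nabla F(p)\in\mathrm{span}\{X_p:x\in A\}\subset v^\perp$. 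Iterating this reduction in successively smaller affine slices terminates at a point where $\nabla F(p)=0$.

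I expect this last boundary argument to be the main obstacle: controlling the value of $\nabla F(p)$ when $\exp_p^{-1}(A)$ degenerates into a proper affine subspace of $T_p\overline{M}$. The hypothesis $A\subset B(q_0,c(q_0))$ is essential here, because it makes $\exp_p^{-1}$ a diffeomorphism onto an ambient Euclidean-like neighborhood containing $CA$, so that separating hyperplanes and standard convex-geometry reductions can be carried out inside $T_p\overline{M}$ exactly as in the flat case.
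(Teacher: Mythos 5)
Your overall strategy is the standard one and matches the sources the paper cites for this lemma (the paper itself gives no proof, referring to Heintze and to Santhanam): minimize $F(q)=\int_A\Phi(d(q,x))\,dV(x)$ with $\Phi'(t)=tG(t)$, compute $\nabla F(q)=-\int_AG(\|X_q\|_q)X_q\,dV$, and show the minimizer is a critical point. The gradient computation and the well-definedness of $F$ on the (closed) convex hull are fine. The gap is in your boundary analysis, in two places. First, the inequality $\langle X_p,v\rangle\geq 0$ cannot be obtained from a supporting hyperplane of $\exp_p^{-1}(CA)$, because $\exp_p^{-1}(CA)$ need not be convex in $T_p\overline{M}$ (geodesics not through $p$ are not mapped to lines); the correct object is the tangent cone of the geodesically convex set $CA$ at $p$, i.e.\ the convex cone of initial velocities of geodesics from $p$ into $CA$, which is contained in a closed half-space when $p$ is a boundary point. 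Second, and more seriously, your contradiction ``strict inequality would decrease $F$ along $t\mapsto\exp_p(tv)$, which enters $CA$ for small $t>0$'' is unjustified: the inward normal of an arbitrary supporting hyperplane need not lie in that tangent cone (think of a corner of a convex polygon, or of a degenerate hull), so this curve may leave $CA$ immediately and minimality of $p$ gives no information along it. The fallback of ``iterating in successively smaller affine slices'' does not transfer to the manifold either: from $\langle X_p,v\rangle=0$ a.e.\ you only learn that $\exp_p^{-1}(A)$ lies in $v^{\perp}$ up to a null set, and $\exp_p(v^{\perp})$ is not totally geodesic, so neither $CA$ nor your minimization problem restricts to such a slice.

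The repair is short and eliminates both the hyperplane and the induction. If $Vol(A)=0$ the statement is trivial for any $p$, so assume $Vol(A)>0$; then $G(\|X_p\|_p)>0$ for a.e.\ $x\in A$. At a minimizer $p\in CA$, first-order optimality holds along every direction $w=\exp_p^{-1}(y)$ with $y\in CA$, because the minimizing geodesic from $p$ to $y$ stays in $CA$; hence $\langle\nabla F(p),w\rangle\geq 0$ for all such $w$, and by closure for every $w$ in the closed convex tangent cone they generate. Each $X_p$ with $x\in A\subset CA$ is such a direction, so $-\nabla F(p)=\int_AG(\|X_p\|_p)X_p\,dV$ itself lies in this closed convex cone. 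Taking $w=-\nabla F(p)$ in the optimality condition gives $-\|\nabla F(p)\|^2\geq 0$, i.e.\ $\nabla F(p)=0$, which is exactly the assertion of the lemma. With this replacement your argument becomes a complete proof along the same lines as the cited references.
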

For a proof see \cite{eh} or \cite{gs}.
\begin{definition}
The point $p$ in the above theorem is called a center of mass of the measurable subset $A$ with respect to the mass distribution function $G$.
\end{definition}
Let $\mathbb{M}$ denote either a noncompact rank-1 symmetric space $(\overline{M}, ds^2)$ or a complete, 
simply connected Riemannian manifold $(\overline{M}, \bar{g})$ 
of dimension $n$ with $K_{\overline{M}} \leq k, $ where $k = -\delta^2$ or $0$. Let $\Omega$ be a bounded domain in $\mathbb{M}$ 
with smooth boundary $\partial \Omega = M$. We 
use the following variational characterization to estimate $\nu_1(\Omega)$.
\begin{equation}\label{e6}
\nu_1(\Omega) = min\left\{ \frac{\int_{\Omega}\parallel \nabla h\parallel^2}{\int_M h^2} \ \big{\vert} \ \int_M h = 0 \right\}.
\end{equation}

First consider the case $\mathbb{M} = (\overline{M}, ds^2)$. Recall that in solving Steklov problem on geodesic balls we obtained following equation 
\begin{equation*}
g^{\prime \prime}(r) + Tr(A(r))g^{\prime}(r) - \lambda_1(S(r))g(r) = 0.
\end{equation*}
Using the fact $-\lambda_1(S(r)) = Tr(A)^{\prime}(r)$ (see \cite{gs} for details) we rewrite above equation in the form 
\begin{equation*}
 g^{\prime \prime} + \left(Tr(A)g\right)^{\prime} = 0.
\end{equation*}
This implies that  
\begin{equation*}
 g^{\prime} + Tr(A)g = 1.
\end{equation*}
Since $Tr(A) = \frac{\phi^{\prime}(r)}{\phi (r)},$ above equation can be written as $(g\phi)^{\prime}(r) = \phi (r)$. Thus we get 
\begin{equation}\label{e9}
 g(r) = \frac{1}{\phi (r)}\int_0^r \phi (t)\,dt.
\end{equation}
Let $p \in \mathbb{M}$ be a center of mass of $M$ corresponding to the functions
$g$ and $\frac{1}{r}$. Let $g_i = g.\frac{x_i}{r} $ where $(x_1, \dots , x_{kn})$ is the geodesic normal coordinate system centered at $p$.
Then by lemma \ref{l1}, it follows that $\int_M g_i = 0 $ for $ 1 \leq i \leq kn$. Hence by \eqref{e6} we get 
\begin{equation}\label{e7}
 \nu_1(\Omega)\int_M \sum_{i=1}^{kn}g_i^2 \,dm \leq \int_{\Omega} \sum_{i=1}^{kn}\parallel \nabla g_i \parallel^2 dV.
\end{equation}
But $ \sum_{i=1}^{kn}g_i^2 = g^2$ and 
\begin{eqnarray*}
\sum_{i=1}^{kn}\parallel \nabla g_i \parallel^2  &  = & g^2\sum_{i=1}^{kn}\parallel \nabla \left(\frac{x_i}{r}\right) \parallel^2 + \left(g^{\prime}\right)^2\sum_{i=1}^{kn}\frac{x_i^2}{r^2} 
+ 2\,g\,g^{\prime}\sum_{i=1}^{kn}\frac{x_i}{r}\innprod{\nabla\frac{x_i}{r}}{\partial r} \\
& = & g^2\sum_{i=1}^{kn}\parallel \nabla \left(\frac{x_i}{r}\right) \parallel^2 + \left(g^{\prime}\right)^2 + 
g\,g^{\prime}\innprod{\nabla\left(\sum_{i=1}^{kn}\frac{x_i^2}{r^2}\right)}{\partial r}\\ 
& = & g^2\sum_{i=1}^{kn}\parallel \nabla \left(\frac{x_i}{r}\right) \parallel^2 + \left(g^{\prime}\right)^2.
\end{eqnarray*}
Substituting these into \eqref{e7}, we have
\begin{equation}\label{e8}
 \nu_1(\Omega)\int_M g^2 \, dm \leq \int_{\Omega} \left(g^2\sum_{i=1}^{kn}\parallel \nabla\left(\frac{x_i}{r}\right) \parallel^2 + \left(g^{\prime}\right)^2\right)dV.
\end{equation}
Now $\parallel \nabla\left(\frac{x_i}{r}\right) \parallel^2 = \frac{x_i}{r}\Delta_{S(r)}\left(\frac{x_i}{r}\right) - \Delta_{S(r)}\left(\frac{x_i}{r}\right)^2$. It is well known
that $\Delta_{S(r)}\left(\frac{x_i}{r}\right) = \lambda_1(S(r))\frac{x_i}{r}$. Hence, 
\begin{equation*}
\sum_{i=1}^{kn}\parallel \nabla \left(\frac{x_i}{r}\right) \parallel^2  =\lambda_1(S(r)). 
\end{equation*}
Substituting this in \eqref{e8}, we get 
\begin{equation}\label{e10}
 \nu_1(\Omega)\int_M g^2 \, dm \leq \int_{\Omega}\left(g^2\lambda_1(S(r)) + \left(g^{\prime}\right)^2\right)dV.
\end{equation}

Next consider the case $\mathbb{M} = (\overline{M}, \bar{g})$ with $K_{\overline{M}} \leq k, $ where $k = -\delta^2$ or $0$.
For $r\geq 0$, let  
$$
\sin_{\delta}r = 
\begin{cases}
\frac{1}{\delta}\sinh\delta \,r & if \, K_{\overline{M}} \leq -\delta^2  \\
 r & if \, K_{\overline{M}} \leq 0
 \end{cases} 
$$ 
and 
$$
g_{\delta}(r) = \frac{1}{\sin_{\delta}^{n-1}r}\int_0^r\sin_{\delta}^{n-1}t\,dt. 
$$
Let $p \in \mathbb{M}$ be a center of mass of $M$ corresponding to the functions
$g_{\delta}$ and $\frac{1}{r}$. Let $g_i = g_{\delta}\frac{x_i}{r} $ where $(x_1, \dots , x_n)$ is the geodesic normal coordinate system centered at $p$.
Then by lemma \ref{l1}, it follows that $\int_M g_i = 0 $ for $ 1 \leq i \leq n$. Hence by \eqref{e6} we get 
\begin{equation}\nonumber
 \nu_1(\Omega)\int_M \sum_{i=1}^ng_i^2 \,dm \leq \int_{\Omega} \sum_{i=1}^n\parallel \nabla g_i \parallel^2 dV.
\end{equation}
But $\sum_{i=1}^ng_i^2 = g_{\delta}^2$. Substituting this in above inequality, we get 
\begin{eqnarray}\nonumber
 \nu_1(\Omega)\int_M g_{\delta}^2 \,dm  &\leq  & \int_{\Omega} \left(g_{\delta}^2\sum_{i=1}^n\parallel \nabla \left(\frac{x_i}{r}\right) \parallel^2 +
\left(g_{\delta}^{\prime}\right)^2\right)dV \\ 
& = & \int_{\Omega} \left(g_{\delta}^2\sum_{i=1}^n\parallel \nabla^{S(r)} \left(\frac{x_i}{r}\right) \parallel^2 + \left(g_{\delta}^{\prime}\right)^2\right)dV \label{e13}. 
\end{eqnarray}

To prove the theorems, we will estimate $ \int_M g^2 \,dm, \int_Mg_{\delta}^2dm $ and the right hand side integrals of equations \eqref{e10} and \eqref{e13}.
We fix some notations before proceeding further. First notice that $M$ is a closed hypersurface of $\mathbb{M}$.
The geodesic polar coordinate system centered at $p$ is denoted by $(r,u)$ where $r> 0$ and $u \in U_p\mathbb{M}$.
For any $q \in M$, let $\gamma_q$ be the unique unit speed geodesic segment joining $p$ and $q$ with $ \gamma_q^{\prime}(0) = u.$ 
We write $d(p,q)$ as $t_q(u)$. 

For the case $\mathbb{M} = (\overline{M}, \bar{g})$, let $W \subset T_p\mathbb{M}$ such that $\Omega = exp_p(W)$. Denote 
by $\mathbb{M}(k)$, the simply connected $n$-dimensional space form of constant curvature $k$, where $ k = -\delta^2$ or $0$. Fix a point $p_k \in \mathbb{M}(k)$ 
and an isometry $i:T_p\mathbb{M} \rightarrow T_{p_k}\mathbb{M}(k)$. Let $\Omega_k = exp_{p_k}(i(W)),M_k = 
\partial \Omega_k$ and for $\bar{q} \in M_k,$ we write $d(p_k, \bar{q}) = t_{\bar{q}}(\bar{u})$ where $\bar{u}$ is the tangent 
at $p_k$ of the unit speed geodesic segment $\gamma_{\bar{q}}$ joining between $p_k $ and $\bar{q}$. Also denote by $\phi$ and $\phi_{\delta},$
the volume density functions of $\mathbb{M}$ and $\mathbb{M}(k)$ respectively along radial geodesics starting from $p$ and $p_k$. 

Observe that for any $q \in M$, the geodesic segment joining $p$ and $q$ may intersect $M$ at points other than $q$. 
For $u \in U_p\mathbb{M}$, let 
\begin{equation*}
r(u) = max\{r>0 \, | \, exp_p(ru) \in M\}
\end{equation*} 
and define 
$$
A = \{exp_p(r(u)u) \, | \, u \in U_p\mathbb{M} \}.
$$
Then $A \subset M$ and hence for any nonnegative measurable function $f$ on $M$, we have $\int_Mf \geq \int_A f$.  

Next lemma gives estimates of $\int_M g^2 \, dm$ and $\int_Mg_{\delta}^2dm $.
\begin{lemma}\label{l2}
Let $\Omega \subset \mathbb{M}$ be a bounded domain with smooth boundary $\partial \Omega = M$.
Fix a point $p \in \Omega$. Then the following holds:
\begin{enumerate}
\item $\mathbb{M} = (\overline{M}, ds^2):$\\ Let $g$ be the function defined by \eqref{e9}. Then 
\begin{eqnarray}\label{el1}
\int_M g^2d(p,q)dm \geq Vol(S(p, R))g^2(R)
\end{eqnarray}
where $dm$ is the measure on $M, \, S(p, R)$ is the geodesic sphere and $ B(p, R)$ is the geodesic ball of radius $R$ centered at $p$ in $\mathbb{M}$ 
and $R>0$ is such that $Vol(\Omega) = Vol(B(p, R))$.

The equality holds if and only if $M$ is a geodesic sphere centered at $p$ of radius $R$.  
\item $\mathbb{M} = (\overline{M}, \bar{g}):$\\ Let $g_{\delta}(r) = \frac{1}{\sin_{\delta}^{n-1}r}\int_0^r\sin_{\delta}^{n-1}t\,dt$. Then  
\begin{eqnarray}\label{el2}
\int_M g_{\delta}^2\,d(p,q)dm \geq Vol(S_k(R_k^{'}))g_{\delta}^2(R_k^{'})
\end{eqnarray}
where $dm$ is the measure on $M, \, S_k(R_k^{'})$ is the geodesic sphere and $ B_k(R_k^{'})$ is the
geodesic ball of radius $R_k^{'}$ in $\mathbb{M}(k)$ and $R_k^{'}>0$ is such that $Vol(\Omega_k) = Vol(B_k(R_k^{'}))$.

Further, the equality holds if and only if $M$ is a geodesic sphere in $\overline{M}$ and $\Omega $ is isometric to $ B_k(R_k^{'})$.
\end{enumerate}
\end{lemma}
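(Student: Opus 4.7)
My plan is to prove both parts of the lemma in parallel, using the uniform notation $h = g$, $\tilde\phi = \phi$, $R_\star = R$ in case (1), and $h = g_\delta$, $\tilde\phi = \phi_\delta$, $R_\star = R_k'$ in case (2). Set $F(r) := h(r)\tilde\phi(r) = \int_0^r\tilde\phi(t)\,dt$, so $F$ is an increasing bijection on $[0,\infty)$; recall that $h$ satisfies $h'' + Tr(A)h' - \lambda_1(S(r))h = 0$ together with the Riccati-type identity $Tr(A)h = 1 - h'$. The strategy is to reduce the surface integral over $M$ to a one-dimensional rearrangement integral and then close the argument via Jensen's inequality applied to a specific convex function on $[0,\infty)$.

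First, since $A := \{exp_p(r(u)u) : u \in U_p\mathbb{M}\}$ is a subset of $M$ (where $r(u)$ is the outermost radial exit time from $\Omega$), I have $\int_M h^2\,dm \geq \int_A h^2\,dm$. Parametrizing $A$ by $u \mapsto exp_p(r(u)u)$, the induced area form is $dm_A = \phi(r(u))/\cos\theta(u)\,du$, where $\theta(u)$ is the angle between $\partial_r$ and the outward normal $N$ to $M$ at the exit point; since $\cos\theta \leq 1$, this gives $\int_A h^2\,dm \geq \int_{U_p\mathbb{M}} h^2(r(u))\phi(r(u))\,du$. In case (2), I additionally invoke Rauch's comparison $\phi \geq \phi_\delta$ to replace $\phi$ by $\tilde\phi = \phi_\delta$ at this step.

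Next, for $I(u) = \{t > 0 : exp_p(tu) \in \Omega\}$, I define $s(u)$ by $\int_0^{s(u)}\tilde\phi = \int_{I(u)}\tilde\phi$; then $s(u) \leq r(u)$, and the volume constraint defining $R_\star$ becomes $\int_{U_p\mathbb{M}}F(s(u))\,du = F(R_\star)\,Vol(U_p\mathbb{M})$. With $\psi(y) := h(F^{-1}(y))\cdot y$ on $[0,\infty)$, Jensen's inequality then yields $\int_{U_p\mathbb{M}}\psi(F(s(u)))\,du \geq Vol(U_p\mathbb{M})\cdot\psi(F(R_\star)) = h^2(R_\star)\,Vol(S_\star(R_\star))$. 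Since $\psi(F(s)) = h^2(s)\tilde\phi(s)$ and $(h^2\tilde\phi)' = \tilde\phi\,h(h'+1) \geq 0$, the monotonicity of $h^2\tilde\phi$ combined with $s(u) \leq r(u)$ allows replacing $s$ by $r$ without decreasing the integral, which closes the chain.

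The main obstacle is verifying the convexity of $\psi$. Using $F' = \tilde\phi$, a direct computation first gives $\psi'(y) = h(r)(h'(r)+1)$ at $r = F^{-1}(y)$; then using the ODE for $h$ and the identity $Tr(A)h = 1 - h'$ to eliminate $h''$ yields the clean formula $\psi''(y) = (2(h')^2 + \lambda_1(S(r))h^2)/\tilde\phi(r) \geq 0$, which is the linchpin of the argument since it converts the single volume constraint into the desired lower bound. For the equality case: $\cos\theta \equiv 1$ in the area step forces $M$ to be a union of spherical pieces centered at $p$; Jensen equality forces $s(u) \equiv R_\star$, and combined with $s \leq r$ and equality in the monotonicity step, this gives $r(u) \equiv R_\star$, so $M = S(p, R)$ in case (1). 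In case (2), Rauch equality additionally forces $K_{\overline M} \equiv k$ along all radial geodesics from $p$ into $\Omega$, making $\Omega$ isometric to $B_k(R_k')$.
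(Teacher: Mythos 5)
Your argument is correct and shares the paper's first half verbatim (restrict to the outermost exit set $A$, write $dm=\sec\theta\,\phi\,du$, drop $\sec\theta\geq 1$, and in case (2) use the volume-density comparison $\phi\geq\phi_{\delta}$), but the second half is a genuinely different packaging of the rearrangement step. The paper writes $g^{2}\phi(t_q(u))=\int_0^{t_q(u)}f\phi\,dr$ with $f=2gg^{\prime}+g^{2}\phi^{\prime}/\phi$, shows $f^{\prime}=2(g^{\prime})^{2}+\lambda_1(S(r))g^{2}>0$, and then compares $\int_{\Omega}f$ with $\int_{B(p,R)}f$ by the bathtub argument using $Vol(\Omega)=Vol(B(p,R))$; you instead introduce the equal-weighted-measure radii $s(u)\leq r(u)$, note that the volume constraint reads $\int_{U_p}F(s(u))\,du=F(R_{\star})Vol(U_p)$, and apply Jensen to $\psi(y)=h(F^{-1}(y))\,y$, whose convexity computation $\psi^{\prime\prime}=\bigl(2(h^{\prime})^{2}+\lambda_1 h^{2}\bigr)/\tilde\phi$ is exactly the paper's monotonicity computation for $f$ (indeed $\psi^{\prime}\circ F=f$). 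Your route avoids the paper's detour through the transplanted boundary $M_k$ and the Rauch length comparison in case (2), and it localizes the whole comparison into a single convexity statement, at the cost of introducing $s(u)$ and $F^{-1}$. Two small points to tighten: (i) you assert $(h^{2}\tilde\phi)^{\prime}=\tilde\phi\,h(h^{\prime}+1)\geq 0$ without justification; $h^{\prime}+1\geq 0$ is not immediate from $h^{\prime}=1-Tr(A)h$, but it does follow from your own computation since $\psi^{\prime}(y)=h(h^{\prime}+1)$ is nondecreasing ($\psi^{\prime\prime}\geq0$) and tends to $0$ as $r\to0$ (because $h(0)=0$ and $h^{\prime}$ is bounded near $0$), so state that one line explicitly -- this is the same fact as the paper's $f\geq 0$; (ii) in case (2) the ODE and the identity $Tr(A)h=1-h^{\prime}$ must be read with the model-space quantities $\phi_{\delta}^{\prime}/\phi_{\delta}$ and $\lambda_1(S_k(r))$ (not the spheres of $\overline M$), which is what your computation actually uses, and in the equality case the pointwise equality $\phi(r(u))=\phi_{\delta}(r(u))$ at $r(u)=R_k^{\prime}$ needs the monotonicity of the ratio $\phi/\phi_{\delta}$ to propagate to all $r\leq R_k^{\prime}$ before invoking the Gunther/Eschenburg rigidity; this is at the same level of detail as the paper's own sketch, but worth a sentence.
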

\begin{proof}
For $q \in M$, let $\phi (t_q(u))$ be the volume density of the geodesic sphere $S(p, t_q(u))$ at the point $q$.
Let $\theta (q)$ be the angle between the unit normal $\eta (q)$ to $M$ and the radial vector $\partial r(q)$. 
Let $du$ be the spherical volume density of the unit sphere $U_p\mathbb{M}$. Then it is known that (\cite{mtw}, p.385, or \cite{tf}, p.1097)
$dm(q) = \sec\theta (q)\phi(t_q(u))du$. 

First consider $\mathbb{M} = (\overline{M}, ds^2)$. In this case we have  
\begin{eqnarray*}
\int_M g^2(d(p,q))dm(q) & \geq & \int_A g^2(d(p,q))dm(q) \\
& = & \int_{U_p\mathbb{M}}g^2(t_q(u)) \, \sec\theta (q)\,\phi(t_q(u))du\\ 
& \geq & \int_{U_p\mathbb{M}} g^2(t_q(u)) \,\phi(t_q(u))du\\
& = & \int_{U_p\mathbb{M}}\int_0^{t_q(u)}\left(2g\,g^{\prime} + g^2\frac{\phi^{\prime}}{\phi}\right)\phi(r) dr\, du \\
& \geq & \int_{\Omega} \left(2g\,g^{\prime} + g^2\frac{\phi^{\prime}}{\phi}\right) \, dV.
\end{eqnarray*}
Notice that $\frac{\phi^{\prime}}{\phi} = Tr(A)$ and 
$g^{\prime} = 1 - Tr(A)g$. Thus $ 2g\,g^{\prime} + g^2\frac{\phi^{\prime}}{\phi} = 2g - Tr(A)g^2$. Now
\begin{eqnarray*}
 \left(2g - Tr(A)g^2\right)^{\prime} & = & -g^2 Tr(A)^{\prime} + 2\left(1 - g\,Tr(A)\right)^2 \\
& = & g^2 \lambda_1(S(r)) + 2\left(1 - g\,Tr(A)\right)^2 \\
& > & 0. 
\end{eqnarray*}
This shows that the function $f(r) = \left(2g\,g^{\prime} + g^2\frac{\phi^{\prime}}{\phi}\right)(r)$ is increasing for $r\,\geq\,0.$
Let $R>0$ be such that $Vol(\Omega) = Vol(B(p,R))$. Then
$$Vol(\Omega \backslash(\Omega \cap B(p,R))) = Vol(B(p, R)\backslash (\Omega \cap B(p, R))).
$$ 
Using these we get, 
\begin{eqnarray*}
\int_{\Omega} f(r)\,dV & = & \int_{\Omega\cap B(p, R)} f(r)\,dV + 
\int_{\Omega \backslash(\Omega\cap B(p, R))} f(r)\,dV \\
& = & \int_{B(p, R)}f(r)\,dV - \int_{B(p, R)\backslash \Omega\cap B(p, R)} f(r)\,dV \\
& & + \int_{\Omega\backslash(\Omega \cap B(p, R))} f(r)\,dV \\
& \geq & \int_{B(p, R)}f(r)\,dV - \int_{B(p, R)\backslash \Omega\cap B(p, R)} f(r)\,dV\\
& & +\int_{\Omega\backslash(\Omega \cap B(p, R))} f(R)\,dV \\
& = & \int_{B(p, R)}f(r)\,dV + 
\int_{B(p, R)\backslash \Omega\cap B(p, R)} (f(R)- f(r))\,dV  \\ 
& \geq & \int_{B(p, R)}f(r)\,dV \\
& = & \int_{U_p\overline{M}}\int_0^R \left(2g\,g^{\prime} + g^2\frac{\phi^{\prime}}{\phi}\right)\phi(r)\,dr\,du \\
& = & \int_{U_p\overline{M}} g^2(R)\,\phi(R) \,du\\ 
& = & g^2(R)\,\phi(R)\int_{U_p\overline{M}} du\\ 
& = & Vol(S(p,R))g^2(R).
\end{eqnarray*}
Further equality holds in above equation if and only if following conditions hold:
\begin{itemize}
\item 
$\sec \, \theta(q) = 1$ for all points $q \in M.$
\item 
$Vol(B(p, R))\backslash (\Omega\cap B(p, R))) = 0$.
\end{itemize}
Now $\sec \, \theta(q) = 1$ implies that the normal $\eta(q) = \partial r(q).$ Thus first condition implies that $\eta(q) = \partial r(q)$ 
for all points in $q \in M$. This shows that $M$ is a geodesic sphere centered at $p$ and hence $\Omega = B(p, R)$. 

Next consider the case $\mathbb{M} = (\overline{M}, \bar{g})$. Recall that in this case $g_{\delta}(r) = \frac{1}{\sin_{\delta}^{n-1}r}\int_0^r\sin_{\delta}^{n-1}t\,dt$. 
For $q \in M,$ consider the corresponding geodesic segment $\gamma_{\bar{q}}$ joining $p_k $ and $\bar{q} \in M_k$ in $\mathbb{M}(k)$.
Then by Rauch comparison theorem \cite{dc} it follows that $l(\gamma_q) \geq l(\gamma_{\bar{q}})$ and 
hence $t_q(u) \geq t_{\bar{q}}(\bar{u})$. By Gunther's volume comparison theorem \cite{ghl} 
we also have $\phi (t_q(u)) \geq \phi_{\delta}(t_q(u)) = \sin_{\delta}^{n-1}t_q(u)$ along the geodesics $\gamma_p $ and $\gamma_{\bar{q}}$ respectively.
Hence, 
\begin{eqnarray*}\nonumber
\int_M g_{\delta}^2(d(p,q))dm(q) & \geq & \int_A g_{\delta}^2(d(p,q))dm(q) \\
& = & \int_{U_p\mathbb{M}}g_{\delta}^2(t_q(u)) \, \sec\theta (q)\,\phi(t_q(u))du\\
& \geq & \int_{U_p\mathbb{M}} g_{\delta}^2(t_q(u)) \,\phi(t_q(u))du \\
& \geq & \int_{U_p\mathbb{M}}g_{\delta}^2(t_q(u)) \,\phi_{\delta}(t_q(u))du \nonumber \\
& \geq & \int_{U_p\mathbb{M}(k)}g_{\delta}^2(t_{\bar{q}}(\bar{u})) \,\phi_{\delta}(t_{\bar{q}}(\bar{u}))d\bar{u} \nonumber\\
& = & \int_{U_p\mathbb{M}(k)} \int_0^{t_{\bar{q}}(\bar{u})}\left(2g_{\delta}\,g_{\delta}^{\prime} + g_{\delta}^2\frac{\phi_{\delta}^{\prime}}{\phi_{\delta}}\right)\phi_{\delta}(r) dr\, du \\
& \geq & \int_{\Omega_k} \left(2g_{\delta}\,g_{\delta}^{\prime} + g_{\delta}^2\frac{\phi_{\delta}^{\prime}}{\phi_{\delta}}\right) \, dV.\nonumber
\end{eqnarray*}
As earlier, the function $f(r) = \left(2g_{\delta}\,g_{\delta}^{\prime} + g_{\delta}^2\frac{\phi_{\delta}^{\prime}}{\phi_{\delta}}\right)(r)$
is increasing for $r > 0$ and hence proceeding similarly we get
\begin{eqnarray*}
\int_M g_{\delta}^2\,d(p,q)dm \geq Vol(S_k(R_k^{'}))g_{\delta}^2(R_k^{'})
\end{eqnarray*}
where $S_k(R_k^{'})$ is a geodesic sphere and $B_k(R_k^{'})$ is a geodesic ball in the space form $\mathbb{M}(k)$ and $R_k^{'}>0$ is such that $Vol(\Omega_k) = Vol(B_k(R_k^{'})$.
 
Further, equality holds in above inequality if and only if following conditions hold:
\begin{itemize}
\item 
$\sec \, \theta(q) = 1$.
\item
$l(\gamma_q) = l(\gamma_{\bar{q}})$  for all points $q \in M, \phi (r)  = \phi_{\delta}(r)$ for $r \, 
\leq \text{diam}(\mathbb{M})$ along the geodesics $\gamma_p $ and $\gamma_{\bar{q}}$ respectively. 
\item 
$Vol(B_k(R_k^{'})\backslash (\Omega_k\cap B_k(R_k^{'}))) = 0$.
\end{itemize}
Now $\sec \, \theta(q) = 1$ implies that the normal $\eta(q) = \partial r(q).$ Thus first condition implies that $\eta(q) = \partial r(q)$ 
for all points in $q \in M$. This shows that $M$ is a 
geodesic sphere centered at $p$. The equality criteria in Gunther's volume comparison theorem (\cite{jhe}, \cite{ghl}) says that if 
$\phi (r)  = \phi_{\delta}(r)$ for $ r \leq R_k^{'} \leq \text{diam}(\mathbb{M})$ then the geodesic balls $B(p,R_k^{'})$ and $B_k(R_k^{'})$ are isometric. 
Hence $\Omega $ is isometric to $ B_k(R_k^{'})$.
\end{proof}
\begin{proof}[Proof of theorem \ref{thm1}]
Recall the inequality \eqref{e10} 
\begin{equation}\nonumber
 \nu_1(\Omega)\int_M g^2 \, dm \leq \int_{\Omega}\left(g^2\lambda_1(S(r)) + \left(g^{\prime}\right)^2\right)dV.
\end{equation}
By lemma \ref{l2}, above inequality becomes
\begin{equation}\label{e11}
 \nu_1(\Omega)Vol(S(p, R))g^2(R) \leq \int_{\Omega}\left(g^2\lambda_1(S(r)) + \left(g^{\prime}\right)^2\right)dV.
\end{equation}
Next lemma shows that the function $g^2(r)\lambda_1(S(r)) + \left(g^{\prime}\right)^2(r)$ is decreasing for $ r > 0$.
\begin{lemma}\label{l3}
Let $(\overline{M}, ds^2)$ be a noncompact rank-1 symmetric space and $g$ be the function given by \eqref{e9}. Then 
$g^2(r)\lambda_1(S(r)) + \left(g^{\prime}\right)^2(r)$ is a decreasing function for $r > 0$. 
\end{lemma}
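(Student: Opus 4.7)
The plan is to differentiate $F(r) := g^{2}(r)\,\lambda_{1}(S(r)) + (g'(r))^{2}$ and exploit the ODE \eqref{e4} together with the already-noted identity $\lambda_{1}(S(r)) = -Tr(A)'(r)$ to turn $F'$ into a manifestly signed expression. Writing $T := Tr(A)$ for brevity, substituting $g'' = \lambda_{1}(S(r))\,g - T g'$ into
\[
F'(r) \;=\; 2gg'\,\lambda_{1}(S(r)) + g^{2}\,\lambda_{1}'(S(r)) + 2 g' g'',
\]
and using $\lambda_{1}' = -T''$, a short manipulation gives
\[
-F'(r) \;=\; 2T (g')^{2} + 4 g T' g' + g^{2} T''.
\]
So the lemma reduces to showing that this expression is strictly positive for $r > 0$.

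Since $T(r) > 0$ and $g(r) > 0$ for $r > 0$, and the above is a quadratic in $g'$ with positive leading coefficient $2T$, completing the square in the variable $g'$ rewrites it as
\[
-F'(r) \;=\; 2T\Bigl(g' + \tfrac{g T'}{T}\Bigr)^{2} \;+\; \frac{g^{2}}{T}\bigl(T T'' - 2 (T')^{2}\bigr).
\]
Consequently the positivity of $-F'$ reduces to the single algebraic inequality
\[
T(r)\,T''(r) \;>\; 2 (T'(r))^{2}.
\]

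To verify this last inequality I would use the explicit formula $T(r) = (kn-1)\coth r + (k-1)\tanh r$ valid for a noncompact rank-1 symmetric space. A careful expansion, organized by rewriting everything in terms of $1/\sinh^{2}r$, $1/\cosh^{2}r$, and $1/(\sinh^{2}r\cosh^{2}r)$ after applying $\coth^{2}r - 1 = 1/\sinh^{2}r$ and $1 - \tanh^{2}r = 1/\cosh^{2}r$, should collapse $TT'' - 2(T')^{2}$ to the clean form
\[
T T'' - 2(T')^{2} \;=\; 2(kn+k-2)\,\lambda_{1}(S(r)) \;+\; \frac{4(kn-1)(k-1)}{\sinh^{2}r\,\cosh^{2}r},
\]
where I have again used $\lambda_{1}(S(r)) = -T'(r)$. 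Both summands are nonnegative (the first one strictly positive once $\dim \overline{M}\geq 2$), which finishes the proof. The main obstacle is precisely this identity: the raw product $TT''$ produces numerous cross terms, and the cancellations against $2(T')^{2}$ only become visible after the regrouping indicated above; once that bookkeeping is performed, positivity is immediate and uniform across the four rank-1 types.
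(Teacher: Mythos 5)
Your proposal is correct, and it follows a genuinely different route from the paper. I verified your computation: with $T=Tr(A)=(kn-1)\coth r+(k-1)\tanh r$ one gets $TT''-2(T')^2=\frac{2(kn-1)^2}{\sinh^2 r}-\frac{2(k-1)^2}{\cosh^2 r}+\frac{6(kn-1)(k-1)}{\sinh^2 r\,\cosh^2 r}$, which is exactly $2(kn+k-2)\lambda_1(S(r))+\frac{4(kn-1)(k-1)}{\sinh^2 r\,\cosh^2 r}>0$, as you claimed; since $T>0$, $g>0$ for $r>0$, and the function $g$ of \eqref{e9} does satisfy the ODE \eqref{e4} (it obeys $g'+Tg=1$, and $T'=-\lambda_1(S(r))$), your completed-square expression for $-F'$ is strictly positive, so $F$ is strictly decreasing, uniformly for all four noncompact rank-1 families. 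The paper argues differently: for $\mathbb{R}\mathbb{H}^n$ it shows the two summands are \emph{separately} decreasing, proving $g'>0$ and $g''\leq 0$ (hence $(g')^2$ decreasing) and then $g^2\lambda_1(S(r))$ decreasing, each via a chain of reductions comparing derivatives of increasing functions vanishing at $0$; for the remaining spaces ($\mathbb{C}\mathbb{H}^n$, $\mathbb{H}\mathbb{H}^n$, $\mathbb{C}a\mathbb{H}^2$) it computes $g$ in closed form case by case and asserts the monotonicity by ``a further computation,'' illustrating only $\mathbb{C}a\mathbb{H}^2$. Your approach buys a single, type-independent verification (one algebraic inequality in $T$) and actually completes the cases the paper only sketches, at the cost of proving only that the sum is decreasing rather than each term; the paper's approach yields the slightly stronger termwise statement for $\mathbb{R}\mathbb{H}^n$ but is not uniform across the four families. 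Either conclusion suffices for the use made of the lemma in \eqref{e20}.
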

\begin{proof}
First consider $\mathbb{R}\mathbb{H}^n$. In this case 
\begin{equation*}
g(r) = \frac {1}{sinh^{n-1}r}{\int_0}^r sinh^{n-1}t\, dt \ \ \ \text{and} \ \ \ \lambda_1(S(r)) = \frac{n-1}{sinh^2r}.
\end{equation*}
We claim that $g^{\prime}(r) > 0$ for all $r >0$. As $g^{\prime} = 1 - Tr(A)g,$ it follows that $\ g^{\prime} > 0$ if and only if 
\begin{equation*}
sinh^nr \geq (n-1)cosh\,r{\int_0}^r sinh^{n-1}t\, dt \ \ \ \ \text{for all} \ \ r \, > \, 0.
\end{equation*}
Let $h_1 (r) = sinh^nr$ and $h_2 (r) = (n-1)cosh\,r{\int_0}^r sinh^{n-1}t\, dt$. Then the functions $ h_1, h_2 $ are strictly increasing 
and $h_1(0) = h_2(0) = 0 $. Thus to prove above inequality, it is enough to prove that $ {h_1}^{\prime}(r) \geq {h_2}^{\prime}(r) $ for $r > 0 $. Now
${h_1}^{\prime}(r) \geq {h_2}^{\prime}(r) $ if and only if
\begin{equation*}
 \cosh \,r \sinh^{n-2}r \geq (n-1)\int_0^r \sinh^{n-1}t\,dt \ \ \ \ \text{for all} \ \ r \, > \, 0.
\end{equation*}
Performing one more step in a similar way we see that the above inequality is true if and only if 
$$
\cosh^2r\,\sinh^{n-3}r \geq \sinh^{n-1}r \ \ \ \ \text{for all} \ \ r \, > \, 0
$$
which is true. Hence the function $g^{\prime}(r) > 0$ for all $r>0$.
Next, we show that $g^{\prime \prime}(r) \leq 0$ for all $r > 0$. This holds if and only if 
\begin{equation*}
sinh^nr\,cosh\,r \geq [n +(n-1)sinh^2r]{\int_0}^r sinh^{n-1}t\, dt \ \ \ \ \text{for all} \ \ r \, > \, 0.
\end{equation*}
Let $g_1(r) = sinh^nr\,cosh\,r$ and $ g_2(r) = [n +(n-1)sinh^2r]{\int_0}^r sinh^{n-1}t\, dt $. These functions are strictly increasing 
and $g_1(0) = g_2(0) = 0 $. Thus to prove above inequality it is enough to prove that $ {g_1}^{\prime}(r) \geq {g_2}^{\prime}(r) $ for $r > 0 $. We see that
${g_1}^{\prime}(r) \geq {g_2}^{\prime}(r) $ if and only if
\begin{equation*}
sinh^nr \geq (n-1)cosh\,r{\int_0}^r sinh^{n-1}t\, dt \ \ \ \ \text{for all} \ \ r \, > \, 0,
\end{equation*}
which is true as we have already seen. Thus the function $\left(g^{\prime}\right)^2$ is decreasing.  \\
Next consider the function $g^2(r)\lambda_1(S(r))$. It is decreasing for $r > 0$ if and only if
\begin{equation*}
sinh^nr \leq n\,cosh\,r{\int_0}^r sinh^{n-1}t\, dt \ \ \ \ \text{for all} \ \ r \, > \, 0.
\end{equation*}
A similar argument as above shows that it is true.

Other cases of noncompact rank-1 symmetric spaces follows quickly as the integral in the definition of $g$ can be computed. We illustrate it by 
explaining the case of $\mathbb{C}a\mathbb{H}^2$. 
In this case we have 
\begin{equation*}
g(r) = \frac{1}{sinh^{15}r \ cosh^7r}{\int_0}^rsinh^{15}t \ cosh^7t dt \ \ \  \text{and} \ \ \ \lambda_1(S(r)) =
\frac{15}{sinh^2r} - \frac{7}{cosh^2r}.
\end{equation*}
By performing the integration we get that 
\begin{equation*}
g(r) = \frac{1}{22}sinh\,r\left (\frac{sech^7r}{240} + \frac{sech^5r}{15} +
\frac{3}{10}sech^3r + sech\,r\right ).
\end{equation*}
A further computation shows that the functions $\left(g^{\prime}\right)^2(r)$ and $g^2(r)\lambda_1(S(r))$ are decreasing for $ r > 0$.
\end{proof}
As the function $ g^2\lambda_1(S(r)) + \left(g^{\prime}\right)^2$ is decreasing, an argument similar to that of in lemma \ref{l2} shows that 
\begin{equation}\label{e20}
\int_{\Omega}\left(g^2\lambda_1(S(r)) + \left(g^{\prime}\right)^2\right)dV \leq \int_{B(p, R)}\left(g^2\lambda_1(S(r)) + \left(g^{\prime}\right)^2\right)dV. 
\end{equation} 
Substituting this in \eqref{e11}, we get 
\begin{equation}\label{e12}
 \nu_1(\Omega) \leq \frac{\int_{B(p, R)}\left(g^2\lambda_1(S(r)) + \left(g^{\prime}\right)^2\right)dV}{g^2(R)Vol(S(R))}.
\end{equation}
By theorem \ref{thm3} we have
\begin{equation*}
 \nu_1(B(R)) = \frac{\int_{B(p, R)}\left(g^2\lambda_1(S(r)) + \left(g^{\prime}\right)^2\right)}{g^2(R)Vol(S(R))}.
\end{equation*}
Substitution of above equality in \eqref{e12} gives the required result 
\begin{equation}\label{e14}
 \nu_1(\Omega) \leq \nu_1(B(R)).
\end{equation}
Furthermore, equality holds if and only if equality criteria in lemma \ref{l2} hold. Thus the equality in \eqref{e14}
holds if and only if $\Omega$ is isometric to the geodesic sphere $B(R)$.
\end{proof}
\begin{remark}
 In the case of compact rank-1 symmetric space, the integrand in the right hand side of integral in \eqref{e10} is strictly increasing. Hence we do not 
have the estimate similar to \eqref{e20}. This shows that the above proof fails for the compact rank-1 symmetric spaces.
\end{remark}
In corollary \ref{cor1} we have seen that when $\overline{M}$ is two dimensional hyperbolic space,
then $\nu_1(B(R))$ is easily computed. Thus we have
\begin{corollary}\label{cor2}
Let $(\overline{M}, ds^2)$ be a two dimensional hyperbolic space and $\Omega$ be a bounded domain with smooth boundary. Then
$$
\nu_1(\Omega) \leq \frac{1}{\sinh R}
$$ where $R>0$ is such that the geodesic ball $B(R)$ has the same volume as that of $\Omega$. 

Further the equality holds if and only if $\Omega$ is isometric to the geodesic ball $B(R)$.
\end{corollary}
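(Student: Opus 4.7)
The plan is to derive Corollary \ref{cor2} as a direct consequence of Theorem \ref{thm1} combined with the explicit computation in Corollary \ref{cor1}(2). The two-dimensional hyperbolic space $(\overline{M}, ds^2)$ is the noncompact rank-1 symmetric space $\mathbb{R}\mathbb{H}^2$, with constant sectional curvature $K_{\overline{M}} \equiv -1$, so the hypothesis $-4 \leq K_{\overline{M}} \leq -1$ of Theorem \ref{thm1} is satisfied trivially, and furthermore $\mathrm{inj}(\overline{M}) = \infty$ so geodesic balls $B(R)$ of every radius $R>0$ are available.

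First I would invoke Theorem \ref{thm1} applied to the bounded domain $\Omega \subset \overline{M}$ with smooth boundary $\partial\Omega = M$, choosing $R>0$ so that $Vol(B(R)) = Vol(\Omega)$; this yields the inequality
\begin{equation*}
\nu_1(\Omega) \leq \nu_1(B(R)),
\end{equation*}
together with the rigidity statement that equality forces $\Omega$ to be isometric to $B(R)$.

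Next I would substitute the explicit value of $\nu_1(B(R))$ supplied by Corollary \ref{cor1}(2), namely $\nu_1(B(R)) = \frac{1}{\sinh R}$, to obtain
\begin{equation*}
\nu_1(\Omega) \leq \frac{1}{\sinh R}.
\end{equation*}
The rigidity assertion carries over verbatim from Theorem \ref{thm1}.

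There is essentially no obstacle: since both Theorem \ref{thm1} and Corollary \ref{cor1} have already been established, Corollary \ref{cor2} is purely a matter of specializing the general rank-1 result to $\mathbb{R}\mathbb{H}^2$ and reading off the formula for the comparison ball's eigenvalue. The only point worth writing down is the brief verification that $\mathbb{R}\mathbb{H}^2$ indeed falls under the curvature hypotheses of Theorem \ref{thm1}, which is immediate.
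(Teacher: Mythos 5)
Your proposal is correct and follows exactly the paper's route: the corollary is obtained by applying Theorem \ref{thm1} to a bounded domain in $\mathbb{R}\mathbb{H}^2$ (which satisfies $-4 \leq K_{\overline{M}} \leq -1$) and substituting the value $\nu_1(B(R)) = \frac{1}{\sinh R}$ from Corollary \ref{cor1}(2), with the rigidity statement inherited from Theorem \ref{thm1}. Nothing further is needed.
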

\begin{remark}
In \cite{e1}, using Weinstock theorem \cite{w} and two dimensional isoperimetric inequality, author proved 
above theorem for two dimensional simply connected space forms, in particular for two dimensional hyperbolic space which is a 
rank-1 symmetric space of noncompact type. Our result is a generalization to all rank-1 symmetric spaces of noncompact type of all dimension. 
\end{remark}
Observe that the same line of proof works for bounded domains in $\mathbb{R}^n$ with smooth boundary.
Also in remark \ref{r4} we have seen that $\nu_1(B(R)) = \frac{1}{R}$ for $B(R) \subset \mathbb{R}^n$. Thus we have 
\begin{theorem}
Let $\Omega \subset \mathbb{R}^n$ be a bounded domain with smooth boundary $\partial \Omega = M$. Then 
\begin{equation*}
 \nu_1(\Omega) \leq \frac{1}{R}
\end{equation*}
where $ R > 0$ is such that the geodesic ball $B(R)$ has the same volume as that of $\Omega$.

Further, the equality holds if and only if $\Omega $ is isometric to a geodesic ball $B(R)$.
\end{theorem}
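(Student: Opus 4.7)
The plan is to specialize the argument used for Theorem~\ref{thm1} to the Euclidean metric on $\mathbb{R}^n$. In this setting the volume density is $\phi(r)=r^{n-1}$, the mean curvature of $S(r)$ is $Tr(A(r))=(n-1)/r$, and $\lambda_1(S(r))=(n-1)/r^2$, so equation \eqref{e9} gives the explicit radial weight $g(r)=r/n$. A one-line computation then shows that
$$
g^2(r)\,\lambda_1(S(r))+\bigl(g'(r)\bigr)^2=\frac{r^2}{n^2}\cdot\frac{n-1}{r^2}+\frac{1}{n^2}=\frac{1}{n}
$$
is actually constant in $r$, which is the Euclidean replacement for Lemma~\ref{l3} and makes the argument cleaner than in Theorem~\ref{thm1}.

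First I would choose a center of mass $p\in\Omega$ of $M$ with respect to $g$ and $1/r$ via Lemma~\ref{l1}, and take as trial functions $g_i=g(r)\,x_i/r$ in the normal chart at $p$. Lemma~\ref{l1} guarantees $\int_M g_i=0$, so they are admissible in the variational characterization \eqref{e6}. Summing over $i$ and using the identity $\sum_i\|\nabla(x_i/r)\|^2=(n-1)/r^2=\lambda_1(S(r))$, which holds in $\mathbb{R}^n$ exactly as in the rank-1 case, I would reach
$$
\nu_1(\Omega)\int_M g^2\,dm\leq\int_\Omega\bigl(g^2\lambda_1(S(r))+(g')^2\bigr)\,dV=\frac{Vol(\Omega)}{n},
$$
which is the Euclidean analogue of \eqref{e10}.

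Next I would rerun the argument of Lemma~\ref{l2}(1) to obtain $\int_M g^2\,dm\geq g^2(R)\,Vol(S(R))$. The function $f(r)=2gg'+g^2\phi'/\phi=(n+1)r/n^2$ is strictly increasing, so the splitting of $\Omega$ into $\Omega\cap B(p,R)$ and its complement, together with the equi-volume condition $Vol(\Omega)=Vol(B(p,R))=\omega_n R^n$, goes through verbatim. Plugging in $g(R)=R/n$ and $Vol(S(R))=n\omega_n R^{n-1}$ and combining with the upper bound yields
$$
\nu_1(\Omega)\leq\frac{Vol(\Omega)/n}{g^2(R)\,Vol(S(R))}=\frac{\omega_n R^n/n}{\omega_n R^{n+1}/n}=\frac{1}{R},
$$
which agrees with $\nu_1(B(R))$ from Remark~\ref{r4}.

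The main delicate point will be the equality characterization. Tracing equality through Lemma~\ref{l2}(1) forces $\sec\theta(q)=1$ pointwise on $M$, so that the outward unit normal $\eta$ coincides with $\partial_r$ along $M$, together with $Vol(B(p,R)\setminus\Omega)=0$. These two conditions force $M$ to be the geodesic sphere $S(p,R)$ and hence $\Omega=B(R)$. No further obstacle is expected, since the constancy of $g^2\lambda_1(S(r))+(g')^2$ removes the monotonicity issue that prevented extending Theorem~\ref{thm1} to the compact rank-1 case.
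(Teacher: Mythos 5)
Your proposal is correct and follows essentially the same route as the paper, which simply observes that the proof of Theorem \ref{thm1} carries over verbatim to $\mathbb{R}^n$ and invokes Remark \ref{r4} for $\nu_1(B(R))=\tfrac{1}{R}$. Your explicit computations ($g(r)=r/n$, the constancy of $g^2\lambda_1(S(r))+(g')^2=\tfrac{1}{n}$, and the monotone $f(r)=(n+1)r/n^2$ in the Lemma \ref{l2} step) are exactly the specialization the paper has in mind.
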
 
\begin{proof}[Proof of theorem \ref{thm2}]
Recall the inequality \eqref{e13}
\begin{eqnarray*}\nonumber
 \nu_1(\Omega)\int_M g_{\delta}^2 \,dm  \leq  \int_{\Omega} \left(g_{\delta}^2\sum_{i=1}^n\parallel \nabla^{S(r)} \left(\frac{x_i}{r}\right) \parallel^2 +
\left(g_{\delta}^{\prime}\right)^2\right)dV.  
\end{eqnarray*}
By inequality \eqref{el2} in lemma \ref{l2}, we get 
\begin{equation}\label{e15}
 \nu_1(\Omega)Vol(S_k(R_k^{'}))\,g_{\delta}^2(R_k^{'}) \leq \int_{\Omega} \left(g_{\delta}^2\sum_{i=1}^n\parallel \nabla^{S(r)} \left(\frac{x_i}{r}\right) \parallel^2 + 
\left(g_{\delta}^{\prime}\right)^2\right)dV.
\end{equation}
Next lemma gives an estimate of $\sum_{i=1}^n\parallel \nabla^{S(r)} \left(\frac{x_i}{r}\right) \parallel^2 =
\frac{1}{r^2}\sum_{i=1}^n\parallel \nabla^{S(r)} x_i \parallel^2$. 
\begin{lemma}\label{l4}
Let $(\overline{M}, \bar{g})$ be a complete, simply connected Riemannian manifold of dimension $n$ such that the sectional 
curvature satisfies $K_{\overline{M}} \leq k$ where $ k = -\delta^2$ or $0$. Fix a point $p\in \overline{M}$ and let $X = (x_1, x_2, ..., x_n)$ be the geodesic normal
coordinate system at $p$. Denote by $S(r)$, the geodesic sphere of radius $r > 0$ center at $p$. Then
$$
\sum_{i=1}^n\parallel \nabla^{S(r)}x_i \parallel^2  \leq (n-1)\frac{r^2}{\sin_{\delta}^2r}.
$$
\end{lemma}
\begin{proof}
Let $q \in S(r)$ and $(e_1, ... , e_{n-1})$ be an orthonormal basis of $T_qS(r)$. Then
$$
\sum_{i=1}^n\parallel \nabla^{S(r)}x_i \parallel^2  = \sum_{i=1}^n\sum_{l=1}^{n-1} \innprod{\nabla^{S(r)}x_i}{e_l}^2 = 
\sum_{i=1}^n\sum_{l=1}^{n-1} \innprod{\nabla x_i}{e_l}^2.
$$ 
Let $e_l = d(exp_p)\bar{e_l}. $
Note that $\innprod{\nabla x_i}{e_l} = e_l(x_i) = \bar{e_l}(x_i\circ exp_p)$ is the $i$-th component of 
$\bar{e_l}$ in the geodesic normal coordinate at $p$. Thus 
$$
\sum_{i=1}^n\innprod{\nabla x_i}{e_l}^2 = \parallel \bar{e_l} \parallel^2.
$$
Consider a unit speed geodesic $\gamma$ in $\mathbb{M}$ such that $\gamma(0) = p$ and $\gamma(r) = q$. Let 
$J_l$ be the Jacobi field along $\gamma$ such that 
$J_l(0) = 0 $ and $J_l^{\prime}(0) = \bar{e_l}$. Then $e_l = d(exp_p)\bar{e_l} = \frac{1}{r}J_l(r).$ 
Fix a point $p_k \in \mathbb{M}(k)$ and a unit speed geodesic $\bar{\gamma}$ such that $\bar{\gamma}(0) =  p_k$.
Let $\bar{u}$ be a unit vector at $p_k$ and $E(t)$ be the vector 
field obtained by parallel translating $\bar{u}$ along $\bar{\gamma}(t)$. Consider the Jacobi field 
$$
J_{\delta}(t) = \sin_{\delta}t\,\parallel J_l^{\prime}(0) \parallel E(t)
$$ 
along $ \bar{\gamma}$. By the Rauch comparison theorem 
$$
\parallel J_{\delta}(t) \parallel \leq \parallel J_l(t) \parallel \ \ \ \  \text{for} \ \ t > 0.  
$$
Hence $\parallel e_l \parallel^2  = \frac{1}{r^2}\parallel J_l(r) \parallel^2 \geq \frac{1}{r^2} \parallel J_{\delta}(r) \parallel^2 
 = \frac{\sin_{\delta}^2r}{r^2}\parallel J_l^{\prime}(0) \parallel^2,$ which implies
$$
\parallel \bar{e_l} \parallel^2 = \parallel J_l^{\prime}(0) \parallel^2 \leq \frac{r^2}{\sin_{\delta}^2r}.
$$
Thus we get 
$$
\sum_{i=1}^n\parallel \nabla^{S(r)}x_i \parallel^2  = \sum_{i=1}^n\sum_{l=1}^{n-1} \innprod{\nabla x_i}{e_l}^2 
= \sum_{l=1}^{n-1} \parallel \bar{e_l} \parallel^2 \leq (n-1)\frac{r^2}{\sin_{\delta}^2r}.
$$  
\end{proof}
Substitution of above estimate in \eqref{e15} gives
\begin{eqnarray*}
\nu_1(\Omega)Vol(S_k(R_k^{'}))\,g_{\delta}^2(R_k^{'}) \leq \int_{\Omega} \left(\frac{n-1}{\sin_{\delta}^2r}g_{\delta}^2 + \left(g_{\delta}^{\prime}\right)^2\right)dV.
\end{eqnarray*}
Using the fact that $\lambda_1(S_k(r)) = \frac{n-1}{\sin_{\delta}^2r}$, above inequality becomes 
\begin{equation}\label{e16}
\nu_1(\Omega)Vol(S_k(R_k^{'}))\,g_{\delta}^2(R_k^{'}) \leq \int_{\Omega}\left(g_{\delta}^2\lambda_1(S_k(r)) + \left(g_{\delta}^{\prime}\right)^2\right)dV. 
\end{equation}
Let $R_k > 0$ be such that $Vol(\Omega) =Vol(B_k(R_k)),$ where $B_k(R_k)$ is a geodesic ball in $\mathbb{M}(k)$. Consider the ball $B(R_k) = B(p,R_k) \subset \overline{M}.$
Then by Gunther's volume comparison theorem $Vol(\Omega)  \leq Vol(B(R_k))$. By lemma \ref{l3}, the function $g_{\delta}^2\lambda_1(S_k(r)) + \left(g_{\delta}^{\prime}\right)^2$
is decreasing. Also notice that 
$$
Vol\left(\Omega\backslash \left(\Omega \cap B(R_k)\right)\right) \leq Vol\left(B(R_k)\backslash \left(\Omega \cap B(R_k)\right)\right).
$$
Using these facts, an argument similar to that of in lemma \ref{l2} gives
\begin{equation*}
 \int_{\Omega}\left(g_{\delta}^2\lambda_1(S_k(r)) + \left(g_{\delta}^{\prime}\right)^2\right)dV \leq 
\int_{B(R_k)}\left(g_{\delta}^2\lambda_1(S_k(r)) + \left(g_{\delta}^{\prime}\right)^2\right)dV.
\end{equation*}
Hence \eqref{e16} becomes
\begin{eqnarray*}\nonumber
\nu_1(\Omega) & \leq & \frac{\int_{B(R_k)}\left(g_{\delta}^2\lambda_1(S_k(r)) + \left(g_{\delta}^{\prime}\right)^2\right)dV}{Vol(S_k(R_k^{'}))\,g_{\delta}^2(R_k^{'})}\\ \nonumber
& = & \frac{\int_{B(R_k)}\left(g_{\delta}^2\lambda_1(S_k(r)) + \left(g_{\delta}^{\prime}\right)^2\right)dV}
{\int_{B_k(R_k)}\left(g_{\delta}^2\lambda_1(S_k(r)) + \left(g_{\delta}^{\prime}\right)^2\right)dV}
\frac{\int_{B_k(R_k)}\left(g_{\delta}^2\lambda_1(S_k(r)) + \left(g_{\delta}^{\prime}\right)^2\right)dV}{g_{\delta}^2(R_k^{'})\,Vol(S_k(R_k^{'}))}\\ \nonumber
& = & C_k\frac{\int_{B_k(R_k)}\left(g_{\delta}^2\lambda_1(S_k(r)) + \left(g_{\delta}^{\prime}\right)^2\right)dV}{g_{\delta}^2(R_k)\,Vol(S_k(R_k))}\nonumber
\end{eqnarray*}
where 
\begin{eqnarray*}
C_k  & = &  \frac{g_{\delta}^2(R_k)\,Vol(S_k(R_k))}{g_{\delta}^2(R_k^{'})\,Vol(S_k(R_k^{'}))}
\frac{\int_{B(R_k)}\left(g_{\delta}^2\lambda_1(S_k(r)) + \left(g_{\delta}^{\prime}\right)^2\right)dV}
{\int_{B_k(R_k)}\left(g_{\delta}^2\lambda_1(S_k(r)) + \left(g_{\delta}^{\prime}\right)^2\right)dV} \\
& = &  \frac{g_{\delta}^2(R_k)\,\phi_{\delta}(R_k)}{g_{\delta}^2(R_k^{'})\,\phi_{\delta}(R_k^{'})}
\frac{\int_{B(R_k)}\left(g_{\delta}^2\lambda_1(S_k(r)) + \left(g_{\delta}^{\prime}\right)^2\right)dV}
{\int_{B_k(R_k)}\left(g_{\delta}^2\lambda_1(S_k(r)) + \left(g_{\delta}^{\prime}\right)^2\right)dV}.
\end{eqnarray*}
Notice that $C_k \geq 1$ and it depends only upon the volume of $\Omega$ and the dimension of $\mathbb{M}$. In the proof of theorem \ref{thm1}, we have seen that 
\begin{eqnarray*}
 \nu_1(B_k(R_k)) & = & \frac{\int_{B_k(R_k)}\left(g_{\delta}^2\lambda_1(S_k(r)) + \left(g_{\delta}^{\prime}\right)^2\right)dV}{g_{\delta}^2(R_k)Vol(S_k(R_k))}.
\end{eqnarray*} 
This implies,
\begin{equation}\label{e17}
 \nu_1(\Omega) \leq C_k \ \nu_1(B_k(R_k)).
\end{equation}
Suppose now that the equality holds. Then the equality criteria in lemma \ref{l2} holds, which shows that $\Omega$ is isometric to the geodesic ball 
$B_k(R_k^{'})$. This in turn implies that the constant $ C_k = 1$. Thus the equality holds in \eqref{e17}  if and only if $\Omega$ is 
isometric to a geodesic ball in $\mathbb{M}(k)$. 
\end{proof}

\end{document}